\newtheorem{remark}{Remark}
\newtheorem{definition}{Definition}
\newtheorem{lemma}{Lemma}
\newtheorem{corollary}{Corollary}
\newtheorem{theorem}{Theorem}
\newtheorem{example}{Example}[section]
\begin{document}
\begin{frontmatter}
\title{High-order numerical methods for the Riesz space fractional advection-dispersion equations}
\author[els]{L.B.~Feng}
\ead{fenglibo2012@126.com}
\author[rvt,rvh]{P.~Zhuang}
\ead{zxy1104@xmu.edu.cn}
\author[els]{F.~Liu\corref{cor1}}
\ead{f.liu@qut.edu.au.} \cortext[cor1]{Corresponding
author:~f.liu@qut.edu.au. (F.~Liu).}
\author[els]{I.~Turner}
\ead{i.turner@qut.edu.au.}
\author[elq]{J.~Li}
\ead{lijingnew@126.com}

\address[els]{School of Mathematical Sciences, Queensland University of Technology,  GPO Box 2434, Brisbane, Qld. 4001, Australia}
\address[rvt]{School of Mathematical Sciences, Xiamen University, Xiamen 361005, China}
\address[rvh]{Fujian Provincial Key Laboratory of Mathematical Modeling and
High-Performance Scientific Computation, Xiamen University, Xiamen 361005, China}
\address[elq]{School of Mathematics and Computing Science, Changsha University of Science and Technology, Changsha 410114, China}

\begin{abstract}
In this paper, we propose high-order numerical methods for the Riesz space fractional advection-dispersion equations (RSFADE) on a {f}inite domain. The RSFADE is obtained from the standard advection-dispersion equation by replacing the first-order and second-order space derivative with the Riesz fractional derivatives of order $\alpha\in(0,1)$ and $\beta\in(1,2]$, respectively. Firstly, we utilize the weighted and shifted Gr\"unwald difference operators to approximate the Riesz fractional derivative and present the {f}inite difference method for the RSFADE. Specifically, we discuss the Crank-Nicolson scheme and solve it in matrix form. Secondly, we prove that the scheme is unconditionally stable and convergent with the accuracy of $\mathcal {O}(\tau^2+h^2)$. Thirdly, we use the Richardson extrapolation method (REM) to improve the convergence order which can be $\mathcal {O}(\tau^4+h^4)$. Finally, some numerical examples are given to show the effectiveness of the numerical method, and the results are excellent with the theoretical analysis.
\end{abstract}
\begin{keyword}
high-order numerical methods \sep Riesz fractional derivative\sep fractional advection-dispersion equation\sep Crank-Nicolson scheme\sep Richardson extrapolation method\sep stability and convergence.
\end{keyword}
\end{frontmatter}
\section{Introduction}\label{sec1}
There have been increasing interests in the description of the physical and chemical processes by means of equations involving fractional derivatives over the last decades. And, fractional derivatives have been successfully applied into many sciences, such as physics \cite{b1}, biology \cite{b2}, chemistry \cite{b3}, hydrology \cite{b4,b5,b6,b7}, and even finance \cite{b8}. In groundwater hydrology the fractional advection-dispersion equation (FADE) is
utilized to model the transport of passive tracers carried by fluid flow in a porous medium \cite{b6,b9}.

Considerable numerical methods for solving the FADE have been proposed. Kilbas et al. \cite{Kilbas06} introduced the theory and applications of fractional differential equations. Meerschaert and Tadjeran \cite{b10} developed practical numerical methods to solve the one-dimensional space FADE with variable coefficients on a finite domain. Liu et al. \cite{b6} transformed the space fractional Fokker-Planck equation into a system of ordinary differential equations (method of lines), which was then solved using backward differentiation formulas. Momani and Odibat \cite{b9} developed two reliable algorithms, the Adomian decomposition method and variational iteration method, to construct numerical solutions of the space-time FADE in the form of a rapidly convergent series with easily computable components. Zhuang et al. \cite{b11} discussed a variable-order fractional advection-diffusion equation with a nonlinear source term on a finite domain. Liu et al. \cite{b12} proposed an approximation of the L\'evy-Feller advection-dispersion process by employing a random walk and finite difference methods. In addition, other finite difference methods \cite{b13}, finite element method \cite{b23}, finite volume method \cite{b24}, homotopy perturbation method \cite{b25} and spectral method \cite{b26,b27} are also employed to approximate the FADE.

In this paper, we consider the following RFADE:
\begin{equation}\label{eq1}
\frac{\partial u(x,t)}{\partial t}=K_{\alpha}\frac{\partial ^\alpha u(x,t)}{\partial \left| x \right|^\alpha}+K_{\beta}\frac{\partial^\beta u(x,t)}{\partial \left| x \right|^\beta},\quad 0<x<L,\quad 0<t\leq T,
\end{equation}
subject to the initial condition:
\begin{equation}\label{eq2}
u(x,0)=\psi(x),\quad\quad 0\le x\le L,
\end{equation}
and the zero Dirichlet boundary conditions:
\begin{equation}\label{eq3}
u(0,t)=0,\quad u(L,t)=0,\quad 0\le t\le T,
\end{equation}
where $0<\alpha<1$, $1<\beta\leq2$, $K_{\alpha}\geq0$ and $K_{\beta}>0$ represent the average fluid velocity and the
dispersion coefficient. The Riesz space fractional operators $\frac{\partial^\alpha u}{\partial \left| x \right|^\alpha }$ and $\frac{\partial^\beta u}{\partial \left| x \right|^\beta }$ on a {f}inite domain $[0,L]$ are
defined respectively as
\begin{align*}
\frac{\partial^\alpha u(x,t)}{\partial \left|x\right|^\alpha }=-c_\alpha\left[{_0D_x^\alpha u(x,t)}+{_xD_L^\alpha u(x,t)}\right],\quad
\frac{\partial^\beta u(x,t)}{\partial\left|x\right|^\beta}=-c_\beta\left[{_0D_x^\beta u(x,t)}+{_xD_L^\beta u(x,t)}\right],
\end{align*}
where $c_\alpha=\frac{1}{2\cos \frac{\pi\alpha}{2}}$, $c_\beta=\frac{1}{2\cos \frac{\pi\beta}{2}}$,
and
\begin{align*}
{_0D_x^\alpha u(x,t)}&=\frac{1}{\Gamma (1-\alpha )}\frac{\partial}{\partial x}\int_{0}^{x}{(x-\xi })^{-\alpha}u(\xi ,t)d\xi,\\
{_xD_L^\alpha u(x,t)}&=\frac{-1}{\Gamma (1-\alpha )}\frac{\partial}{\partial x}\int_{x}^{L}{(\xi -x})^{-\alpha }u(\xi ,t)d\xi,\\
{_0D_x^\beta u(x,t)}&=\frac{1}{\Gamma (2-\beta )}\frac{\partial^2}{\partial x^2}\int_{0}^{x}{(x-\xi })^{1-\beta}u(\xi
,t)d\xi,\\
{_xD_L^\beta u(x,t)}&=\frac{1}{\Gamma (2-\beta )}\frac{\partial^2}{\partial x^2}\int_{x}^{L}{(\xi -x})^{1-\beta }u(\xi ,t)d\xi ,
\end{align*}
where $\Gamma(\cdot)$ represents the Euler gamma function.

The fractional kinetic equation (\ref{eq1}) possesses a physical meaning (see \cite{b21,b22} for further details). Physical considerations of a fractional advection-dispersion transport model restrict $0 <\alpha<1$, $1 < \beta \leq2$, and we assume $K_\alpha\geq 0$ and $K_\beta>0$ so that the flow is from left to right. In the case of $\alpha=1$ and $\beta=2$, Eq.(\ref{eq1}) reduces to the classical advection-dispersion equation (ADE). In
this paper, we only consider the fractional cases: when $K_\alpha=0$, Eq.(\ref{eq1}) reduces to the Riesz fractional diffusion equation (RFDE) \cite{b21} and when $K_\alpha\neq0$, the Riesz fractional advection-dispersion equation (RFADE) is obtained \cite{b22}.

For the RFADE (\ref{eq1}), Anh and Leonenko \cite{b18} presented a spectral representation of the mean-square solution without the non-homogeneous part and for some range of values of the parameters. Later, Shen et al. \cite{b19} derived the fundamental solution of Eq.(\ref{eq1}) and discussed the numerical approximation of Eq.(\ref{eq1}) using finite difference method with first convergence order. Another method based on the spectral approach and the weak solution formulation was given in Leonenko and Phillips \cite{b20}. In addition, Zhang et al. \cite{b14} use the Galerkin finite element method to approximate the RFADE. Besides, Yang et al. \cite{b28} applied the $L1/L2$-approximation method, the standard/shifted Gr\"unwald method, and the matrix transform method (MTM) to solve the RFADE. And, Ding et al. \cite{b29} also consider the numerical solution of the RFADE by using improved matrix transform method and the (2, 2) Pade approximation. Most of the numerical methods proposed by these
authors are low order or lack stability analysis.

In this paper, based on the weighted and shifted Gr\"unwald difference (WSGD) operators to approximate the Riesz space fractional derivative, we obtain the second order approximation of the RFADE. Furthermore, We propose the {f}inite difference method for the RFADE and obtain the Crank-Nicolson scheme. Moreover, we prove that the Crank-Nicolson scheme is unconditionally stable and convergent with the accuracy of $\mathcal{O}(\tau^2+h^2)$ and improve the convergence order to $\mathcal{O}(\tau^4+h^4)$ by applying the Richardson extrapolation method.

The outline of the paper is as follows. In Section \ref{sec2}, the WSGD operators and some lemmas are given. In Section \ref{sec3}, we first present the {f}inite difference method for the RFADE, and then derive the Crank-Nicolson scheme. We proceed with the proof of the stability and convergence of the Crank-Nicolson scheme in Section \ref{sec4}. Besides, we further improve the convergence order by applying the Richardson extrapolation method. In order to verify the effectiveness of our theoretical analysis, some numerical examples are carried out and the results are compared with the exact solution in Section \ref{sec5}. Finally, the conclusions are drawn.

\section{The approximation for the Riemann-Liouville fractional derivative}\label{sec2}
First, in the interval $[a,b]$, we take the mesh points $x_i=a+ih$, $i=0, 1,\cdots,m$, and $t_n=n\tau$, $n=0, 1,\cdots,N$, where $h=(b-a)/M$, $\tau= T/N$, i.e., $h$ and $\tau$ are the uniform spatial step size and temporal step size. Now, we give the definition of the Riemann-Liouville fractional derivative.

\begin{definition}
[\cite{b15}]\label{def1} The $\gamma~(n-1<\gamma<n)$ order left and right Riemann-Liouville fractional derivatives of the function $v(x)$ on $[a,b]$, are given by
\begin{itemize}
\item[$\bullet$] left Riemann-Liouville fractional derivative:
\begin{align*}
{_aD_x^\gamma v(x)}=\frac{1}{\Gamma (n-\gamma )}\frac{{{\mathrm{d}}^{n}}}{\mathrm{d} {{x}^{n}}}\int_{a}^{x}{(x-\xi)}^{n-\gamma-1}v(\xi )~\mathrm{d}\xi,
\end{align*}

\item[$\bullet$] right Riemann-Liouville fractional derivative:
\begin{align*}
{_xD_b^\gamma v(x)}=\frac{(-1)^n}{\Gamma (n-\gamma )}\frac{{{\mathrm{d} }^{n}}}
{\mathrm{d} {{x}^{n}}}\int_{x}^{b}{(\xi -x)}^{   n-\gamma-1 }v(\xi )~\mathrm{d}\xi.
\end{align*}
\end{itemize}
\end{definition}

Generally, the standard Gr\"{u}nwald-Letnikov difference formula is applied to approximate the Riemann-Liouville fractional derivative. Meerschaert and Tadjeran \cite{b10} showed that the standard Gr\"{u}nwald-Letnikov difference formula was often unstable for time dependent problems and they proposed the shifted Gr\"{u}nwald difference operators
\begin{equation*}
A^\gamma_{h,p}v(x)=\frac{1}{h^\gamma}\sum_{k=0}^{\infty}g_k^{(\gamma)}v(x-(k-p)h),\quad
B^\gamma_{h,q}v(x)=\frac{1}{h^\gamma}\sum_{k=0}^{\infty}g_k^{(\gamma)}v(x+(k-q)h),
\end{equation*}
whose accuracies are first order, i.e.,
\begin{equation*}
A^\gamma_{h,p}v(x)={_{-\infty}D_x^\gamma v(x)}+\mathcal {O}(h),\quad
B^\gamma_{h,q}v(x)={_{x}D_{+\infty}^\gamma v(x)}+\mathcal {O}(h),
\end{equation*}
where $p, q$ are integers and $g_k^{(\gamma)}=(-1)^k{\gamma \choose k}$. In fact, the coefficients $g_k^{(\gamma)}$ are the coefficients of the power series of the function $(1-z)^\gamma$,
\begin{equation*}
(1-z)^\gamma=\sum_{k=0}^{\infty}(-1)^k{\gamma \choose
k}z^k=\sum_{k=0}^{\infty}g_k^{(\gamma)}z^k,
\end{equation*}
for all $|z|\leq 1$, and they can be evaluated recursively
\begin{equation*}
g_0^{(\gamma)}=1,\quad
g_k^{(\gamma)}=(1-\frac{\gamma+1}{k})g_{k-1}^{(\gamma)},~k=1,2,\cdots
\end{equation*}

\begin{lemma}
[\cite{b13}]\label{lm1}Suppose that $0<\alpha<1$, then the coefficients
$g_k^{(\alpha)}$ satisfy
\begin{equation*}
  \begin{cases}
   ~g_0^{(\alpha)}=1,~g_1^{(\alpha)}=-\alpha<0,~g_2^{(\alpha)}=\frac{\alpha(\alpha-1)}{2}<0,\\
   ~g_1^{(\alpha)}< g_2^{(\alpha)}< g_3^{(\alpha)}<\cdots<0,\\
   ~\sum\limits_{k=0}^{\infty}g_k^{(\alpha)}=0,~\sum\limits_{k=0}^{m}g_k^{(\alpha)}>0,~m\geq1.
\end{cases}
\end{equation*}
\end{lemma}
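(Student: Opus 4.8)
The whole lemma can be extracted from two facts already on the table: the recursion $g_0^{(\alpha)}=1$, $g_k^{(\alpha)}=(1-\frac{\alpha+1}{k})g_{k-1}^{(\alpha)}$, and the generating-function identity $(1-z)^\alpha=\sum_{k=0}^\infty g_k^{(\alpha)}z^k$, valid for $|z|\le1$. I would begin with the three explicit values, which are immediate: $g_0^{(\alpha)}=1$; $g_1^{(\alpha)}=(1-(\alpha+1))\cdot1=-\alpha$; and $g_2^{(\alpha)}=(1-\frac{\alpha+1}{2})(-\alpha)=\frac{\alpha(\alpha-1)}{2}$. The displayed signs then follow at once from $0<\alpha<1$.

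For the monotone chain $g_1^{(\alpha)}<g_2^{(\alpha)}<\cdots<0$, the key observation is that for every $k\ge2$ the multiplier in the recursion satisfies $0<1-\frac{\alpha+1}{k}<1$, because $0<\alpha+1<2\le k$. Starting from $g_1^{(\alpha)}=-\alpha<0$, an induction on $k$ then yields simultaneously that $g_k^{(\alpha)}<0$ for all $k\ge1$ (a negative number times a factor in $(0,1)$ stays negative) and that $|g_k^{(\alpha)}|=(1-\frac{\alpha+1}{k})\,|g_{k-1}^{(\alpha)}|<|g_{k-1}^{(\alpha)}|$. Since all the $g_k^{(\alpha)}$ are negative, the strict decrease of the absolute values is precisely the assertion $g_{k-1}^{(\alpha)}<g_k^{(\alpha)}$.

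For the two statements about the sums I would pass to the partial sums $S_m=\sum_{k=0}^m g_k^{(\alpha)}$. Multiplying the identity $\sum_{k=0}^\infty g_k^{(\alpha)}z^k=(1-z)^\alpha$ by $(1-z)^{-1}=\sum_{j=0}^\infty z^j$ shows that $S_m$ is the coefficient of $z^m$ in $(1-z)^{\alpha-1}$, i.e. $S_m=(-1)^m{\alpha-1 \choose m}=\prod_{j=1}^m(1-\frac{\alpha}{j})$. Each factor $1-\frac{\alpha}{j}$ lies in $(0,1)$ since $\alpha<1\le j$; hence $S_m>0$ for all $m\ge1$, which is the claimed inequality, and $(S_m)_{m\ge1}$ is a strictly decreasing sequence of positive numbers, hence convergent. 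To identify the limit, note that $\log S_m=\sum_{j=1}^m\log(1-\frac{\alpha}{j})\le-\alpha\sum_{j=1}^m\frac{1}{j}\to-\infty$ by divergence of the harmonic series, so $S_m\to0$; equivalently, since $\sum_k g_k^{(\alpha)}$ converges, Abel's theorem gives $\sum_{k=0}^\infty g_k^{(\alpha)}=\lim_{z\to1^-}(1-z)^\alpha=0$.

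The only part needing genuine care is this last evaluation: the power series converges absolutely only for $|z|<1$, so letting $z\to1$ must be justified either through the closed-form product above together with divergence of $\sum 1/j$, or by invoking Abel's theorem once the convergence of the $S_m$ has been established. The remaining items reduce to the two short inductions indicated, which I would carry out in the order above.
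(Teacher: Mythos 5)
Your proof is correct, but note that the paper itself offers no argument to compare against: Lemma \ref{lm1} is stated as a known result imported from the reference \cite{b13}, so any complete derivation you give is necessarily ``a different route'' from the paper, which simply defers to the literature. Your two inductions from the recursion $g_k^{(\alpha)}=(1-\frac{\alpha+1}{k})g_{k-1}^{(\alpha)}$ settle the explicit values, the signs, and the monotone chain $g_1^{(\alpha)}<g_2^{(\alpha)}<\cdots<0$ cleanly, since the multiplier lies in $(0,1)$ for $k\ge 2$. The most valuable part of your write-up is the treatment of the partial sums: identifying $S_m=\sum_{k=0}^m g_k^{(\alpha)}$ with the coefficient of $z^m$ in $(1-z)^{\alpha-1}$, hence $S_m=\prod_{j=1}^m\bigl(1-\tfrac{\alpha}{j}\bigr)>0$, gives the positivity claim at once, and the estimate $\log S_m\le -\alpha\sum_{j=1}^m\frac{1}{j}\to-\infty$ (or, alternatively, Abel's theorem once convergence of $S_m$ is known) correctly justifies $\sum_{k=0}^{\infty}g_k^{(\alpha)}=0$, a point that is often waved through by formally setting $z=1$ in $(1-z)^{\alpha}=\sum_k g_k^{(\alpha)}z^k$ even though that identity is only guaranteed for $|z|<1$ without such an argument. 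In short: the paper buys brevity by citation; your proof buys self-containedness, and in particular the closed form for $S_m$ also shows the sharper fact (not claimed in the lemma) that the partial sums decrease strictly to $0$.
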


\begin{lemma}
[\cite{b13,b16}]\label{lm2}Suppose that $1<\beta\leq2$, then the coefficients
$g_k^{(\beta)}$ satisfy
\begin{equation*}
 \begin{cases}
   ~g_0^{(\beta)}=1,~g_1^{(\beta)}=-\beta<0,~g_2^{(\beta)}=\frac{\beta(\beta-1)}{2}>0,\\
   ~1\geq g_2^{(\beta)}\geq g_3^{(\beta)}\geq\cdots\geq0,\\
   ~\sum\limits_{k=0}^{\infty}g_k^{(\beta)}=0,~\sum\limits_{k=0}^{m}g_k^{(\beta)}<0,~m\geq1.
\end{cases}
\end{equation*}
\end{lemma}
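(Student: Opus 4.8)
The plan is to obtain every assertion directly from the recursion $g_0^{(\beta)}=1$, $g_k^{(\beta)}=\bigl(1-\frac{\beta+1}{k}\bigr)g_{k-1}^{(\beta)}$ together with the generating-function identity $\sum_{k=0}^{\infty}g_k^{(\beta)}z^k=(1-z)^\beta$, in the same spirit as the analysis of the coefficients $g_k^{(\alpha)}$ underlying Lemma~\ref{lm1}. First I would record the base cases from the recursion: $g_1^{(\beta)}=(1-(\beta+1))g_0^{(\beta)}=-\beta$ and $g_2^{(\beta)}=\bigl(1-\frac{\beta+1}{2}\bigr)(-\beta)=\frac{\beta(\beta-1)}{2}$. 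Since $1<\beta\le 2$ these give $g_1^{(\beta)}<0$ and $g_2^{(\beta)}>0$, while the bound $g_2^{(\beta)}\le 1$ is equivalent to $(\beta-2)(\beta+1)\le 0$, which holds on $(1,2]$.

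Next I would prove the monotone decay $1\ge g_2^{(\beta)}\ge g_3^{(\beta)}\ge\cdots\ge 0$. The key observation is that the ratio $g_k^{(\beta)}/g_{k-1}^{(\beta)}=1-\frac{\beta+1}{k}$ lies in $[0,1)$ precisely when $k\ge\beta+1$; since $\beta\le 2$ this holds for every $k\ge 3$. Combined with $g_2^{(\beta)}>0$, an induction on $k$ then yields $0\le g_k^{(\beta)}\le g_{k-1}^{(\beta)}$ for all $k\ge 3$, and together with $g_2^{(\beta)}\le 1$ this gives the whole chain. (At the endpoint $\beta=2$ one simply has $g_k^{(\beta)}=0$ for $k\ge 3$, so the inequalities become non-strict there.)

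For the two sum statements I would first settle convergence of $\sum_{k\ge 0}g_k^{(\beta)}$. Writing $g_k^{(\beta)}=\frac{\Gamma(k-\beta)}{\Gamma(-\beta)\,\Gamma(k+1)}$ shows $g_k^{(\beta)}=\mathcal{O}(k^{-\beta-1})$ with $\beta+1>1$, so the series converges absolutely (alternatively, the nonnegative tail is summable by the monotonicity just proved). Abel's theorem then gives $\sum_{k=0}^{\infty}g_k^{(\beta)}=\lim_{z\to 1^-}(1-z)^\beta=0$. For the partial sums set $S_m=\sum_{k=0}^{m}g_k^{(\beta)}$; then $S_1=1-\beta<0$, and for $m\ge 2$ we have $S_m=S_{m-1}+g_m^{(\beta)}$ with $g_m^{(\beta)}\ge 0$, so $(S_m)_{m\ge 1}$ is nondecreasing and converges to $0$, whence $S_m<0$ for every $m\ge 1$ (strict for $1<\beta<2$, since the tail terms are then strictly positive; at $\beta=2$ one gets $S_m=0$ for $m\ge 2$, so that endpoint must be excluded or the inequality relaxed to $\le$).

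The only step that is not pure bookkeeping is the justification of the limit $z\to 1^-$ in the generating function — that is, the convergence of $\sum g_k^{(\beta)}$ and the applicability of Abel summation — which rests on the Gamma-function asymptotics for $g_k^{(\beta)}$ (or on an elementary telescoping tail estimate extracted from the monotonicity above). Everything else follows mechanically from the recursion and the constraint $1<\beta\le 2$.
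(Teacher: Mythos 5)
The paper never proves Lemma~\ref{lm2}: it is quoted from \cite{b13,b16}, so there is no internal proof to compare against, and your blind argument is effectively supplying a proof the paper omits. On the whole it is correct: the base values $g_1^{(\beta)}=-\beta$, $g_2^{(\beta)}=\frac{\beta(\beta-1)}{2}\in(0,1]$, the monotone decay via the ratio $g_k^{(\beta)}/g_{k-1}^{(\beta)}=1-\frac{\beta+1}{k}\in[0,1)$ for $k\ge3$, the use of Abel's theorem to get $\sum_{k=0}^{\infty}g_k^{(\beta)}=\lim_{z\to1^-}(1-z)^{\beta}=0$, and the deduction that the partial sums $S_m$ are nondecreasing from $m=1$ on and converge to $0$, hence are negative, are all sound. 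Two remarks. First, the parenthetical claim that summability of the tail follows ``by the monotonicity just proved'' is not valid on its own: a nonincreasing nonnegative sequence need not be summable, so you genuinely need the Gamma-function asymptotics $g_k^{(\beta)}=\mathcal{O}(k^{-\beta-1})$ that you cite as the primary route; an alternative that avoids both Abel and Stirling is the telescoping identity $g_k^{(\beta)}=g_k^{(\beta-1)}-g_{k-1}^{(\beta-1)}$ coming from $(1-z)^{\beta}=(1-z)(1-z)^{\beta-1}$, which gives $\sum_{k=0}^{m}g_k^{(\beta)}=g_m^{(\beta-1)}$ and reduces both sum statements to the properties of $g_m^{(\gamma)}$ with $\gamma=\beta-1\in(0,1]$ recorded in Lemma~\ref{lm1}. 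Second, your endpoint observation is correct and worth keeping: at $\beta=2$ one has $g_k^{(\beta)}=0$ for $k\ge3$ and $\sum_{k=0}^{m}g_k^{(\beta)}=0$ for $m\ge2$, so the strict inequality $\sum_{k=0}^{m}g_k^{(\beta)}<0$ in the lemma holds only for $1<\beta<2$ and must be weakened to $\le0$ (or the endpoint excluded) at $\beta=2$; this is a genuine, if harmless, imprecision in the statement as cited, and it does not affect its use in Theorem~\ref{thm1}, where only the sign information for $w_k^{(\beta)}$ and the total sum are exploited.
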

Inspired by the shifted Gr\"unwald difference operators and multi-step method, Tian et al. \cite{b16} derive the WSGD operators:
\begin{align*}
{_LD^\gamma_{h,p,q}}v(x)&=\frac{\gamma-2q}{2(p-q)}A^\gamma_{h,p}v(x)
+\frac{2p-\gamma}{2(p-q)}A^\gamma_{h,q}v(x),\\
{_RD^\gamma_{h,p,q}}v(x)&=\frac{\gamma-2q}{2(p-q)}B^\gamma_{h,p}v(x)
+\frac{2p-\gamma}{2(p-q)}B^\gamma_{h,q}v(x).
\end{align*}

\begin{lemma}
[\cite{b16}]\label{lm3} Supposing that $1<\gamma<2$, let $v(x)\in L^1(\mathbb{R})$, ${_{-\infty}D_x^\gamma v(x)}$ and
${_{x}D_{+\infty}^\gamma v(x)}$ and their Fourier transforms belong to $L^1(\mathbb{R})$, then the WSGD operators satisfy
\begin{eqnarray*}
{_LD^\gamma_{h,p,q}}v(x)={_{-\infty}D_x^\gamma v(x)}+\mathcal{O}(h^2),\quad
{_RD^\gamma_{h,p,q}}v(x)={_{x}D_{+\infty}^\gamma v(x)}+ \mathcal{O}(h^2),
\end{eqnarray*}
uniformly for $x\in \mathbb{R}$, where $p$, $q$ are integers and $p\neq q$.
\end{lemma}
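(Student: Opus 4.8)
The plan is to argue entirely on the Fourier side, where every operator involved carries an explicit symbol; throughout, a hat denotes the Fourier transform in $x$. First I would recall the symbols of the one-sided fractional derivatives, $\widehat{{_{-\infty}D_x^\gamma v}}(\omega)=(i\omega)^\gamma\widehat v(\omega)$ and $\widehat{{_{x}D_{+\infty}^\gamma v}}(\omega)=(-i\omega)^\gamma\widehat v(\omega)$. Taking the Fourier transform of the defining series termwise (legitimate since $\sum_k|g_k^{(\gamma)}|<\infty$ for $1<\gamma<2$, by Lemma~\ref{lm2}) and using the generating-function identity $\sum_{k\ge0}g_k^{(\gamma)}z^k=(1-z)^\gamma$ for $|z|\le1$, evaluated at $z=e^{\mp i\omega h}$, yields
\[
\widehat{A^\gamma_{h,p}v}(\omega)=h^{-\gamma}e^{ip\omega h}(1-e^{-i\omega h})^\gamma\,\widehat v(\omega),\qquad
\widehat{B^\gamma_{h,q}v}(\omega)=h^{-\gamma}e^{-iq\omega h}(1-e^{i\omega h})^\gamma\,\widehat v(\omega).
\]
I would carry the argument through for the left operator ${_LD^\gamma_{h,p,q}}$; the right one follows by replacing $z$ with $-z$ (equivalently, conjugating the symbol when $\omega h$ is real).

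Next I would factor $(i\omega)^\gamma$ out of the symbol of the WSGD operator. Writing $\lambda_1=\frac{\gamma-2q}{2(p-q)}$, $\lambda_2=\frac{2p-\gamma}{2(p-q)}$ and $z=i\omega h$,
\[
\widehat{{_LD^\gamma_{h,p,q}}v}(\omega)=W_{p,q}(i\omega h)\,(i\omega)^\gamma\widehat v(\omega),\qquad
W_{p,q}(z)=\left(\frac{1-e^{-z}}{z}\right)^{\gamma}\left(\lambda_1e^{pz}+\lambda_2e^{qz}\right).
\]
The crux is a short Taylor expansion of $W_{p,q}$ about $z=0$: from $\left((1-e^{-z})/z\right)^{\gamma}=1-\frac{\gamma}{2}z+\mathcal{O}(z^2)$ and $\lambda_1e^{pz}+\lambda_2e^{qz}=(\lambda_1+\lambda_2)+(\lambda_1p+\lambda_2q)z+\mathcal{O}(z^2)$, multiplication gives
\[
W_{p,q}(z)=(\lambda_1+\lambda_2)+\left(\lambda_1p+\lambda_2q-\frac{\gamma}{2}(\lambda_1+\lambda_2)\right)z+\mathcal{O}(z^2).
\]
A one-line computation verifies that the weights are rigged exactly so that $\lambda_1+\lambda_2=1$ and $\lambda_1p+\lambda_2q=\frac{\gamma}{2}$; the constant term is therefore $1$, the first-order term vanishes, and $W_{p,q}(z)=1+\mathcal{O}(z^2)$. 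This cancellation — which fails for the individual shifted Gr\"unwald operators $A^\gamma_{h,p},B^\gamma_{h,q}$, hence their merely first-order accuracy — is what produces the $\mathcal{O}(h^2)$ rate, so it is the step I would present in most detail.

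To return to physical space I would invoke the inverse Fourier transform (justified because the integrability hypotheses force $\widehat v\in L^1$), obtaining
\[
{_LD^\gamma_{h,p,q}}v(x)-{_{-\infty}D_x^\gamma v(x)}=\frac{1}{2\pi}\int_{\mathbb{R}}e^{i\omega x}\left(W_{p,q}(i\omega h)-1\right)(i\omega)^\gamma\widehat v(\omega)\,d\omega.
\]
From the expansion, $|W_{p,q}(i\theta)-1|\le c\,\theta^2$ on a bounded neighbourhood of $\theta=0$, while for large $|\theta|$ the elementary bound $|(1-e^{-i\theta})/(i\theta)|^\gamma\le(2/|\theta|)^\gamma$ keeps $W_{p,q}(i\theta)$ bounded; hence $|W_{p,q}(i\theta)-1|\le c\min\{\theta^2,1\}$ uniformly on $\mathbb{R}$. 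Splitting the integral at $|\omega|=1/h$ and applying this estimate on each part collapses the bound to $\le C h^2\int_{\mathbb{R}}|\omega|^{\gamma+2}|\widehat v(\omega)|\,d\omega$, which is $\mathcal{O}(h^2)$ uniformly in $x$ under the regularity assumptions of the lemma (in the form used in \cite{b16}, these guarantee $|\omega|^{\gamma+2}\widehat v\in L^1$). Repeating with $-z$ and the conjugated weights gives the companion estimate for ${_RD^\gamma_{h,p,q}}v$.

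The part I expect to be genuinely delicate is not the leading-order cancellation, which is a couple of lines of algebra, but making the remainder estimate honest: one must control the multivalued factor $\left((1-e^{-i\omega h})/(i\omega h)\right)^{\gamma}$ uniformly over all of $\mathbb{R}$ rather than just near $\omega=0$, pin down exactly how much decay of $\widehat v$ is needed to tame the high-frequency tail of the error integral, and check that the constant hidden in $\mathcal{O}(h^2)$ does not depend on $x$.
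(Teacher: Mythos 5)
Your Fourier-symbol argument is correct and is essentially the proof of Tian et al.\ \cite{b16}: the paper itself gives no proof of Lemma~\ref{lm3}, quoting it from that reference and merely remarking that the same argument extends to $0<\gamma<1$, and your steps (symbols of the shifted Gr\"unwald operators, the cancellation $\lambda_1+\lambda_2=1$, $\lambda_1p+\lambda_2q=\gamma/2$, and the split-frequency remainder bound) are exactly that argument. Your closing caveat is also on target: the tail estimate requires $|\omega|^{\gamma+2}\widehat v\in L^1$, so the hypotheses actually needed (and assumed in \cite{b16}) involve ${_{-\infty}D_x^{\gamma+2}v}$ and its Fourier transform, which is stronger than the $\gamma$-order condition as transcribed in the statement here.
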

Tian et al. prove Lemma \ref{lm3} under the additional conditions that $1<\gamma<2$. In fact, their proof also holds for the $0<\gamma<1$ case. The proof proceeds the same as  \cite{b16}, hence we will not repeat it here.

\begin{remark}\label{rem1}
Considering a well defined function $v(x)$ on the bounded interval $[a,b]$, if $v(a)=0$ or $v(b)=0$, the function
$v(x)$ can be zero extended for $x < a$ or $x > b$. And then the $\gamma$ order left and right Riemann-Liouville fractional derivatives of $v(x)$ at each point $x$ can be approximated by the WSGD operators with second order accuracy
\begin{align*}
{_aD_x^{\gamma}v(x)}&=\frac{\lambda_1}{h^\gamma}\sum_{k=0}^{[\frac{x-a}{h}]+p}g_k^{(\gamma)}v(x-(k-p)h)
+\frac{\lambda_2}{h^\gamma}\sum_{k=0}^{[\frac{x-a}{h}]+q}g_k^{(\gamma)}v(x-(k-q)h)+\mathcal{O}(h^2),\\
{_xD_b^{\gamma}v(x)}&=\frac{\lambda_1}{h^\gamma}\sum_{k=0}^{[\frac{b-x}{h}]+p}g_k^{(\gamma)}v(x+(k-p)h)
+\frac{\lambda_2}{h^\gamma}\sum_{k=0}^{[\frac{b-x}{h}]+q}g_k^{(\gamma)}v(x+(k-q)h)+\mathcal{O}(h^2),
\end{align*}
where $\lambda_1=\frac{\gamma-2q}{2(p-q)}$ and $\lambda_2=\frac{2p-\gamma}{2(p-q)}$.
\end{remark}
When $(p,q)=(1,0)$, $0<\alpha<1$ and $1<\beta\leq2$, the discrete approximations for the Riemann-Liouville fractional derivatives on the domain $[0,L]$ are
\begin{align} \label{eq4}
{_0D_x^{\alpha}}v(x_i)&=\frac{1}{h^\alpha}\sum_{k=0}^{i+1}w_k^{(\alpha)}v(x_{i-k+1})
+\mathcal{O}(h^2),\\\label{eq5}
{_xD_L^{\alpha}}v(x_i)&=\frac{1}{h^\alpha}\sum_{k=0}^{m-i+1}w_k^{(\alpha)}v(x_{i+k-1})
+\mathcal{O}(h^2),\\\label{eq6}
{_0D_x^{\beta}}v(x_i)&=\frac{1}{h^\beta}\sum_{k=0}^{i+1}w_k^{(\beta)}v(x_{i-k+1})
+\mathcal{O}(h^2),\\\label{eq7}
{_xD_L^{\beta}}v(x_i)&=\frac{1}{h^\beta}\sum_{k=0}^{m-i+1}w_k^{(\beta)}v(x_{i+k-1})
+\mathcal{O}(h^2),
\end{align}
where
\begin{equation}\label{eq8}
w_0^{(\alpha)}=\frac{\alpha}{2}g_0^{(\alpha)},~w_k^{(\alpha)}=\frac{\alpha}{2}g_k^{(\alpha)}
+\frac{2-\alpha}{2}g_{k-1}^{(\alpha)},~k\geq1
\end{equation}
\begin{equation}\label{eq9}
w_0^{(\beta)}=\frac{\beta}{2}g_0^{(\beta)},~w_k^{(\beta)}=\frac{\beta}{2}g_k^{(\beta)}
+\frac{2-\beta}{2}g_{k-1}^{(\beta)},~k\geq1
\end{equation}
Now, we discuss the properties of the coefficients $w_k^{(\alpha)}$ and $w_k^{(\beta)}$.
\begin{lemma}\label{lm4}
Suppose that $0<\alpha<1$, then the coefficients $w_k^{(\alpha)}$ satisfy
\begin{equation*}
  \begin{cases}
   ~w_0^{(\alpha)}=\frac{\alpha}{2}>0,~w_1^{(\alpha)}=\frac{2-\alpha-\alpha^2}{2}>0,
   ~w_2^{(\alpha)}=\frac{\alpha(\alpha^2+\alpha-4)}{4}<0,\\
   ~w_2^{(\alpha)}< w_3^{(\alpha)}< w_4^{(\alpha)}<\cdots<0,\\
   ~\sum\limits_{k=0}^{\infty}w_k^{(\alpha)}=0,~\sum\limits_{k=0}^{m}w_k^{(\alpha)}>0,~m\geq1.
\end{cases}
\end{equation*}
\begin{proof}
Combining the definition of $w_k^{(\alpha)}$ and the property of $g_k^{(\alpha)}$, it is easy to derive the value of $w_0^{(\alpha)}$, $w_1^{(\alpha)}$ and $w_2^{(\alpha)}$. When $k\geq2$, by the definition of $w_k^{(\alpha)}$
\begin{align*}
w_k^{(\alpha)}=\frac{\alpha}{2}g_k^{(\alpha)}
+\frac{2-\alpha}{2}g_{k-1}^{(\alpha)},
\end{align*}
we have $w_k^{(\alpha)}<0$ as $g_k^{(\alpha)}<0$ for $k\geq1$ and $0<\alpha<1$. Moreover,
\begin{align*}
w_{k+1}^{(\alpha)}-w_k^{(\alpha)}=\frac{\alpha}{2}(g_{k+1}^{(\alpha)}-g_k^{(\alpha)})
+\frac{2-\alpha}{2}(g_k^{(\alpha)}-g_{k-1}^{(\alpha)}).
\end{align*}
Recalling Lemma \ref{lm1}, $g_k^{(\alpha)}<g_{k+1}^{(\alpha)}$ when $k\geq1$, we obtain when $k\geq2$
$w_{k+1}^{(\alpha)}-w_k^{(\alpha)}>0$, i.e.,
$w_k^{(\alpha)}<w_{k+1}^{(\alpha)}$.
For the sum $\sum\limits_{k=0}^{\infty}w_k^{(\alpha)}$, we have
\begin{align*}
\sum\limits_{k=0}^{\infty}w_k^{(\alpha)}&=w_0^{(\alpha)}+\sum\limits_{k=1}^{\infty}w_k^{(\alpha)}=\frac{\alpha}{2}g_0^{(\alpha)}+\sum\limits_{k=1}^{\infty}(\frac{\alpha}{2}g_k^{(\alpha)}
+\frac{2-\alpha}{2}g_{k-1}^{(\alpha)})\\
&=\frac{\alpha}{2}\sum\limits_{k=0}^{\infty}g_k^{(\alpha)}+\frac{2-\alpha}{2}\sum\limits_{k=0}^{\infty}g_k^{(\alpha)}=\sum\limits_{k=0}^{\infty}g_k^{(\alpha)}=0.
\end{align*}
Since $w_k^{(\alpha)}<0$ when $k\geq2$,
$\sum\limits_{k=0}^{m}w_k^{(\alpha)}>0$ for $m\geq1$.
\end{proof}
\end{lemma}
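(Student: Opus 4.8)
The plan is to read off every claim directly from the explicit formulas (\ref{eq8}), namely $w_0^{(\alpha)}=\frac{\alpha}{2}g_0^{(\alpha)}$ and $w_k^{(\alpha)}=\frac{\alpha}{2}g_k^{(\alpha)}+\frac{2-\alpha}{2}g_{k-1}^{(\alpha)}$ for $k\ge1$, feeding in the sign, monotonicity and summation properties of the Gr\"unwald coefficients $g_k^{(\alpha)}$ collected in Lemma~\ref{lm1}. No new tools are needed; the whole argument is a sequence of elementary estimates, and the feature that makes it work is that the weights $\frac{\alpha}{2}$ and $\frac{2-\alpha}{2}$ are both positive when $0<\alpha<1$, so the convex-type combinations inherit the signs of the $g$'s.

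First I would evaluate the three leading coefficients. Substituting $g_0^{(\alpha)}=1$, $g_1^{(\alpha)}=-\alpha$ and $g_2^{(\alpha)}=\frac{\alpha(\alpha-1)}{2}$ into (\ref{eq8}) gives $w_0^{(\alpha)}=\frac{\alpha}{2}$, $w_1^{(\alpha)}=\frac{2-\alpha-\alpha^2}{2}$ and $w_2^{(\alpha)}=\frac{\alpha(\alpha^2+\alpha-4)}{4}$. The sign assertions then follow by factoring the quadratics: $2-\alpha-\alpha^2=-(\alpha-1)(\alpha+2)>0$ on $(0,1)$, while $\alpha^2+\alpha-4$ is increasing and equals $-2$ at $\alpha=1$, hence is negative on $(0,1)$, so $w_2^{(\alpha)}<0$.

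Next, for the general index I would use that $g_k^{(\alpha)}<0$ for all $k\ge1$ (Lemma~\ref{lm1}): for $k\ge2$ both summands in $w_k^{(\alpha)}=\frac{\alpha}{2}g_k^{(\alpha)}+\frac{2-\alpha}{2}g_{k-1}^{(\alpha)}$ are strictly negative, so $w_k^{(\alpha)}<0$. For the monotone chain I would write $w_{k+1}^{(\alpha)}-w_k^{(\alpha)}=\frac{\alpha}{2}(g_{k+1}^{(\alpha)}-g_k^{(\alpha)})+\frac{2-\alpha}{2}(g_k^{(\alpha)}-g_{k-1}^{(\alpha)})$ and invoke $g_k^{(\alpha)}<g_{k+1}^{(\alpha)}$ for $k\ge1$; when $k\ge2$ both brackets are positive, hence $w_k^{(\alpha)}<w_{k+1}^{(\alpha)}$.

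Finally, for the summation statements I would reindex the shifted sum: $\sum_{k=0}^{\infty}w_k^{(\alpha)}=\frac{\alpha}{2}\sum_{k\ge0}g_k^{(\alpha)}+\frac{2-\alpha}{2}\sum_{k\ge0}g_k^{(\alpha)}=\sum_{k\ge0}g_k^{(\alpha)}=0$ by Lemma~\ref{lm1}, and then for $m\ge1$ write $\sum_{k=0}^{m}w_k^{(\alpha)}=-\sum_{k=m+1}^{\infty}w_k^{(\alpha)}>0$, since every remaining term has index at least $2$ and is therefore negative. The only things that need care are the index bookkeeping (the negativity and monotonicity of $g_k^{(\alpha)}$ begin at $k=1$, so for $w_k^{(\alpha)}$ they begin at $k=2$, while $w_1^{(\alpha)}$ is still positive) and the remark that the tail $\sum_{k>m}w_k^{(\alpha)}$ is a convergent series of eventually one-signed terms, which legitimizes the partial-sum identity. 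I do not anticipate any genuine obstacle: the lemma is a routine corollary of Lemma~\ref{lm1}.
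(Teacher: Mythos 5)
Your proposal is correct and follows essentially the same route as the paper's own proof: explicit evaluation of $w_0^{(\alpha)},w_1^{(\alpha)},w_2^{(\alpha)}$, negativity and monotonicity of $w_k^{(\alpha)}$ for $k\geq2$ from the corresponding properties of $g_k^{(\alpha)}$ in Lemma~\ref{lm1}, and the reindexed telescoping sum giving $\sum_{k=0}^{\infty}w_k^{(\alpha)}=0$ with positive partial sums. The only differences are cosmetic (your explicit factoring of the quadratics and the remark on the convergent one-signed tail), so nothing further is needed.
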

\begin{lemma}
[\cite{b16}]\label{lm5}Suppose that $1<\beta\leq2$, then the coefficients $w_k^{(\beta)}$ satisfy
\begin{equation*}
 \begin{cases}
   ~w_0^{(\beta)}=\frac{\beta}{2}>0,~w_1^{(\beta)}=\frac{2-\beta-\beta^2}{2}<0,~w_2^{(\beta)}
   =\frac{\beta(\beta^2+\beta-4)}{4},\\
   ~1\geq w_0^{(\beta)}\geq w_3^{(\beta)}\geq w_4^{(\beta)}\geq\cdots\geq0,\\
   ~\sum\limits_{k=0}^{\infty}w_k^{(\beta)}=0,~\sum\limits_{k=0}^{m}w_k^{(\beta)}<0,~m\geq2.
\end{cases}
\end{equation*}
\end{lemma}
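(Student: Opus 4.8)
The plan is to follow the proof of Lemma~\ref{lm4} almost verbatim, substituting the sign pattern of $g_k^{(\beta)}$ from Lemma~\ref{lm2} for that of $g_k^{(\alpha)}$. First I would read off the closed forms of $w_0^{(\beta)},w_1^{(\beta)},w_2^{(\beta)}$ by inserting $g_0^{(\beta)}=1$, $g_1^{(\beta)}=-\beta$, $g_2^{(\beta)}=\frac{\beta(\beta-1)}{2}$ into (\ref{eq9}), which gives $w_0^{(\beta)}=\frac{\beta}{2}$, $w_1^{(\beta)}=\frac{2-\beta-\beta^2}{2}$ and $w_2^{(\beta)}=\frac{\beta(\beta^2+\beta-4)}{4}$. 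The stated signs are then elementary consequences of $1<\beta\le2$: this range forces $\frac{\beta}{2}\in(\frac12,1]$ (so $1\ge w_0^{(\beta)}>0$) and $\beta^2+\beta>2$ (so $w_1^{(\beta)}<0$), while $\beta^2+\beta-4$ changes sign at $\beta=\frac{\sqrt{17}-1}{2}\approx1.56$, which is why $w_2^{(\beta)}$ is left unsigned.

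Next I would handle the tail $k\ge3$. Since then both indices $k$ and $k-1$ are $\ge2$, Lemma~\ref{lm2} gives $g_k^{(\beta)},g_{k-1}^{(\beta)}\in[0,1]$, and because the weights $\frac{\beta}{2}$ and $\frac{2-\beta}{2}$ are nonnegative for $1<\beta\le2$, (\ref{eq9}) yields $w_k^{(\beta)}\ge0$. For monotonicity I would write
\[
w_{k+1}^{(\beta)}-w_k^{(\beta)}=\frac{\beta}{2}\bigl(g_{k+1}^{(\beta)}-g_k^{(\beta)}\bigr)+\frac{2-\beta}{2}\bigl(g_k^{(\beta)}-g_{k-1}^{(\beta)}\bigr),
\]
and use $g_2^{(\beta)}\ge g_3^{(\beta)}\ge\cdots\ge0$ from Lemma~\ref{lm2}: when $k\ge3$ both bracketed differences are $\le0$, hence $w_{k+1}^{(\beta)}\le w_k^{(\beta)}$. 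The step I expect to be the main obstacle is the remaining inequality $w_0^{(\beta)}\ge w_3^{(\beta)}$, which has to ``jump over'' the intermediate coefficients $w_1^{(\beta)},w_2^{(\beta)}$; I would prove it by the chain
\[
w_3^{(\beta)}=\frac{\beta}{2}g_3^{(\beta)}+\frac{2-\beta}{2}g_2^{(\beta)}\le\frac{\beta}{2}g_2^{(\beta)}+\frac{2-\beta}{2}g_2^{(\beta)}=g_2^{(\beta)}=\frac{\beta(\beta-1)}{2}\le\frac{\beta}{2}=w_0^{(\beta)},
\]
using $g_3^{(\beta)}\le g_2^{(\beta)}$ and $\beta-1\le1$. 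Together these give $1\ge w_0^{(\beta)}\ge w_3^{(\beta)}\ge w_4^{(\beta)}\ge\cdots\ge0$.

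Finally, for the series I would reuse the regrouping from Lemma~\ref{lm4}: splitting each $w_k^{(\beta)}$ with $k\ge1$ into its $g_k^{(\beta)}$ and $g_{k-1}^{(\beta)}$ parts and collecting terms gives $\sum_{k=0}^{\infty}w_k^{(\beta)}=\bigl(\frac{\beta}{2}+\frac{2-\beta}{2}\bigr)\sum_{k=0}^{\infty}g_k^{(\beta)}=0$. Then $\sum_{k=0}^{m}w_k^{(\beta)}=-\sum_{k=m+1}^{\infty}w_k^{(\beta)}$, and for $m\ge2$ every term of the tail is $\ge0$ by the previous paragraph, so the partial sum is $\le0$. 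Strict negativity follows whenever $1<\beta<2$, because then the recursion factors $1-\frac{\beta+1}{j}=\frac{j-\beta-1}{j}$ are strictly positive for every $j\ge3$, so $g_k^{(\beta)}>0$ for all $k\ge2$ and at least one tail term is positive. The borderline case $\beta=2$ is genuinely exceptional — there $w_k^{(2)}=g_k^{(2)}$ vanishes for $k\ge3$ and the partial sums equal $0$ exactly — so the strict inequality in the statement is to be read for $1<\beta<2$ (or weakened to $\le0$ at $\beta=2$).
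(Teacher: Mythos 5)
Your proof is correct, but it is worth noting that the paper itself gives no argument for this lemma at all: it is simply quoted from Tian et al.\ \cite{b16}, in contrast to Lemma~\ref{lm4}, which the authors do prove. What you supply is essentially the missing $\beta$-analogue of that proof, and every step checks out: the closed forms of $w_0^{(\beta)},w_1^{(\beta)},w_2^{(\beta)}$ follow from (\ref{eq9}) and Lemma~\ref{lm2}; nonnegativity and monotonicity of $w_k^{(\beta)}$ for $k\geq3$ follow because both $g_k^{(\beta)}$ and $g_{k-1}^{(\beta)}$ then lie in the monotone nonnegative range of Lemma~\ref{lm2} and the weights $\frac{\beta}{2},\frac{2-\beta}{2}$ are nonnegative; your chain $w_3^{(\beta)}\leq g_2^{(\beta)}=\frac{\beta(\beta-1)}{2}\leq\frac{\beta}{2}=w_0^{(\beta)}\leq1$ is a clean way to bridge the gap that the paper's Lemma~\ref{lm4} argument never has to face; and the regrouping giving $\sum_{k=0}^{\infty}w_k^{(\beta)}=\sum_{k=0}^{\infty}g_k^{(\beta)}=0$ plus the sign of the tail gives the partial-sum inequality. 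Your observation about the endpoint is also sharp and genuinely relevant: at $\beta=2$ one has $g_k^{(2)}=0$ for $k\geq3$, hence $w_k^{(2)}=0$ for $k\geq3$ and $\sum_{k=0}^{m}w_k^{(2)}=0$ for $m\geq2$, so the strict inequality in the statement (inherited verbatim from \cite{b16}) really only holds for $1<\beta<2$ and should be read as $\leq0$ at $\beta=2$; the paper glosses over this, and it is harmless for the later use of the lemma (Theorem~\ref{thm1} only needs the signs of $w_1^{(\beta)}$, $w_0^{(\beta)}+w_2^{(\beta)}$, $w_k^{(\beta)}$ for $k\geq3$, and the value of the full series), but your proof is the more careful statement.
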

\section{The finite difference method for the RFADE}\label{sec3}
In this section, we utilize the Eqs.(\ref{eq4})-(\ref{eq7}) to approximate the Riesz space fractional derivative and derive the Crank-Nicolson scheme of the equation. We define $t_n=n\tau$, $n=0, 1,\cdots, N$, let $\Omega=[0,L]$ be a {f}inite domain, setting $S_h$ be a uniform partition of $\Omega$, which is given by $x_i=ih$ for $i=0, 1,\cdots,m$, where $\tau=T/N$ and $h=L/m$ are the time and space steps, respectively. At point $(x_i,t_{n-\frac{1}{2}})$, it is easy to conclude that,
\begin{eqnarray*}
\frac{u(x_i,t_n)-u(x_i,t_{n-1})}{\tau}=\bigg(\frac{\partial
u(x,t)}{\partial t}\bigg)_i^{n-1/2}+\mathcal {O}(\tau^2).
\end{eqnarray*}
\begin{align*}
&\bigg(K_{\alpha}\frac{\partial^\alpha u(x,t)}{\partial |x|^\alpha}+K_{\beta}\frac{\partial^\beta u(x,t)}{\partial|x |^\beta} \bigg)_i^{n-1/2}\\
=&\frac{1}{2}\bigg(K_{\alpha}\frac{\partial^\alpha u(x_i,t_n)}{\partial| x |^\alpha}+K_{\beta}\frac{\partial^\beta u(x_i,t_n)}{\partial|x|^\beta}\bigg)+\frac{1}{2}\bigg(K_{\alpha}\frac{\partial^\alpha u(x_i,t_{n-1})}{\partial|x|^\alpha}+K_{\beta}\frac{\partial^\beta u(x_i,t_{n-1})}{\partial|x|^\beta} \bigg)+\mathcal
{O}(\tau^2).
\end{align*}
We present the semi-discrete form of Eq.(\ref{eq1}),
\begin{align}
&\frac{u(x_i,t_n)-u(x_i,t_{n-1})}{\tau}\nonumber\\\label{eq10}
=&\frac{1}{2}\bigg\{K_{\alpha}\frac{\partial^\alpha u(x_i,t_n)}{\partial | x|^\alpha}
+K_{\beta}\frac{\partial^\beta u(x_i,t_n)}{\partial |x|^\beta}\bigg\}
+\frac{1}{2}\bigg\{K_{\alpha}\frac{\partial^\alpha u(x_i,t_{n-1})}{\partial |x|^\alpha}
+K_{\beta}\frac{\partial^\beta u(x_i,t_{n-1})}{\partial |x|^\beta}\bigg\}+O(\tau^2).
\end{align}
Substituting (\ref{eq4})-(\ref{eq7}) into (\ref{eq10}), we obtain
\begin{align}
\frac{u(x_i,t_n)-u(x_i,t_{n-1})}{\tau}
=&-\frac{K_\alpha c_\alpha}{2h^\alpha}\bigg[\sum_{k=0}^{i+1}w_k^{(\alpha)}u(x_{i-k+1},t_{n})
+\sum_{k=0}^{m-i+1}w_k^{(\alpha)}u(x_{i+k-1},t_{n})\bigg]\nonumber\\
-&\frac{ K_\beta c_\beta}{2h^\beta}\bigg[\sum_{k=0}^{i+1}w_k^{(\beta)}u(x_{i-k+1},t_{n})
+\sum_{k=0}^{m-i+1}w_k^{(\beta)}u(x_{i+k-1},t_{n})\bigg]\nonumber\\ 
-&\frac{K_\alpha c_\alpha}{2h^\alpha}\bigg[\sum_{k=0}^{i+1}w_k^{(\alpha)}u(x_{i-k+1},t_{n-1})
+\sum_{k=0}^{m-i+1}w_k^{(\alpha)}u(x_{i+k-1},t_{n-1})\bigg]\nonumber\\ \label{eqN1}
-&\frac{ K_\beta c_\beta}{2h^\beta}\bigg[\sum_{k=0}^{i+1}w_k^{(\beta)}u(x_{i-k+1},t_{n-1})
+\sum_{k=0}^{m-i+1}w_k^{(\beta)}u(x_{i+k-1},t_{n-1})\bigg]+O(\tau^2+h^2).
\end{align}
Let $u_i^n$ be the approximation solution of $u(x_i,t_n)$, then we can obtain the numerical scheme
\begin{align}\label{eq11}
&u_i^n+\mu_\alpha\bigg[\sum_{k=0}^{i+1}w_k^{(\alpha)}u^n_{i-k+1}+\sum_{k=0}^{m-i+1}w_k^{(\alpha)}u^n_{i+k-1}\bigg]
+\mu_\beta\bigg[\sum_{k=0}^{i+1}w_k^{(\beta)}u^n_{i-k+1}+\sum_{k=0}^{m-i+1}w_k^{(\beta)}u^n_{i+k-1}\bigg]\nonumber\\
=&u_i^{n-1}-\mu_\alpha\bigg[\sum_{k=0}^{i+1}w_k^{(\alpha)}u^{n-1}_{i-k+1}+\sum_{k=0}^{m-i+1}w_k^{(\alpha)}u^{n-1}_{i+k-1}\bigg]
-\mu_\beta\bigg[\sum_{k=0}^{i+1}w_k^{(\beta)}u^{n-1}_{i-k+1}+\sum_{k=0}^{m-i+1}w_k^{(\beta)}u^{n-1}_{i+k-1}\bigg],
\end{align}
where $\mu_\alpha=\frac{\tau K_\alpha c_\alpha}{2h^\alpha}$ and $\mu_\beta=\frac{\tau K_\beta c_\beta}{2h^\beta}$. Denote
\begin{equation}\label{eq12}
A=\left(\begin{array}{cccccc}
w_1^{(\alpha)} & w_0^{(\alpha)} & 0 & \cdots & 0 & 0\\
w_2^{(\alpha)} & w_1^{(\alpha)} & w_0^{(\alpha)} & \cdots & 0 & 0\\
w_3^{(\alpha)} & w_2^{(\alpha)} & w_1^{(\alpha)} & \cdots & 0 & 0\\
\vdots & \vdots & \vdots & \ddots & \vdots & \vdots\\
w_{m-2}^{(\alpha)} & w_{m-3}^{(\alpha)} & w_{m-4}^{(\alpha)} & \cdots & w_1^{(\alpha)} & w_0^{(\alpha)}\\
w_{m-1}^{(\alpha)} & w_{m-2}^{(\alpha)} & w_{m-3}^{(\alpha)} &
\cdots & w_2^{(\alpha)} & w_1^{(\alpha)}
\end{array}\right),
\end{equation}
\begin{equation}\label{eq13}
B=\left(\begin{array}{cccccc}
w_1^{(\beta)} & w_0^{(\beta)} & 0 & \cdots & 0 & 0\\
w_2^{(\beta)} & w_1^{(\beta)} & w_0^{(\beta)} & \cdots & 0 & 0\\
w_3^{(\beta)} & w_2^{(\beta)} & w_1^{(\beta)} & \cdots & 0 & 0\\
\vdots & \vdots & \vdots & \ddots & \vdots & \vdots\\
w_{m-2}^{(\beta)} & w_{m-3}^{(\beta)} & w_{m-4}^{(\beta)} & \cdots & w_1^{(\beta)} & w_0^{(\beta)}\\
w_{m-1}^{(\beta)} & w_{m-2}^{(\beta)} & w_{m-3}^{(\beta)} & \cdots &
w_2^{(\beta)} & w_1^{(\beta)}
\end{array}\right),
\end{equation}
and
\begin{equation}\label{eq14}
D=\mu_\alpha(A+A^T)+\mu_\beta(B+B^T),
\end{equation}
\begin{eqnarray*}
U^{n}=[u_1^n,u_2^n,\cdots,u_{m-1}^n]^{T}.
\end{eqnarray*}
Thus, Eq. (\ref{eq11}) can be simplified as
\begin{eqnarray}\label{eq15}
(I+D) U^n=(I-D)U^{n-1}.
\end{eqnarray}
The boundary and initial conditions are discretized as
\begin{equation*}
u_i^0 = \psi (ih),\quad U^0=[u^0_1,u^0_2,\cdots,u^0_{m-1}]^T,
\end{equation*}
where $i=1, 2, \cdots,m-1$.

\section{Theoretical analysis of the {f}inite difference method}\label{sec4}

\subsection{Stability}\label{sec4.1}
Before giving the proof, we start with some useful lemmas.
\begin{lemma}[\cite{b17}]\label{lm6} Let $\mathscr{A}$ be an $m-1$ order positive define
matrix, then for any parameter $\theta\geq0$, the following two
inequalities
\begin{eqnarray*}
||(I+\theta \mathscr{A})^{-1}||\leq1,\quad
||(I+\theta \mathscr{A})^{-1}(I-\theta \mathscr{A})||\leq1
\end{eqnarray*}
hold.
\end{lemma}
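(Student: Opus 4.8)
The plan is to reduce both operator-norm bounds to scalar inequalities via the spectral decomposition of $\mathscr{A}$. Throughout, ``positive definite'' is understood in the usual sense: $\mathscr{A}$ is a real symmetric $(m-1)\times(m-1)$ matrix all of whose eigenvalues are strictly positive (in the application $\mathscr{A}$ will be the matrix $D$ of (\ref{eq14}), which is real symmetric by construction). First I would write $\mathscr{A}=Q\Lambda Q^{T}$ with $Q$ orthogonal and $\Lambda=\mathrm{diag}(\lambda_1,\dots,\lambda_{m-1})$, $\lambda_j>0$. For any $\theta\ge 0$ the matrix $I+\theta\mathscr{A}=Q(I+\theta\Lambda)Q^{T}$ has diagonal entries $1+\theta\lambda_j\ge 1>0$, so it is invertible, and
\[
(I+\theta\mathscr{A})^{-1}=Q(I+\theta\Lambda)^{-1}Q^{T},\qquad
(I+\theta\mathscr{A})^{-1}(I-\theta\mathscr{A})=Q(I+\theta\Lambda)^{-1}(I-\theta\Lambda)Q^{T};
\]
both of these matrices are symmetric, hence their $2$-norm equals their spectral radius.

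The remaining step is to bound the eigenvalues. For the first inequality, the eigenvalues of $(I+\theta\mathscr{A})^{-1}$ are $(1+\theta\lambda_j)^{-1}$, which lie in $(0,1]$ because $\theta\lambda_j\ge 0$; hence $\|(I+\theta\mathscr{A})^{-1}\|\le 1$. For the second, the eigenvalues of $(I+\theta\mathscr{A})^{-1}(I-\theta\mathscr{A})$ are $\dfrac{1-\theta\lambda_j}{1+\theta\lambda_j}$, and writing $t=\theta\lambda_j\ge 0$ one checks $-1<\dfrac{1-t}{1+t}\le 1$, i.e.\ $\bigl|\dfrac{1-t}{1+t}\bigr|\le 1$; hence $\|(I+\theta\mathscr{A})^{-1}(I-\theta\mathscr{A})\|\le 1$. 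Alternatively, one can avoid the eigendecomposition with an energy argument: setting $w=(I+\theta\mathscr{A})^{-1}y$ and using $\mathscr{A}=\mathscr{A}^{T}$ (so that $I-\theta\mathscr{A}$ commutes with $(I+\theta\mathscr{A})^{-1}$) gives $\|y\|^{2}=\|w\|^{2}+2\theta\,w^{T}\mathscr{A}w+\theta^{2}\|\mathscr{A}w\|^{2}$ and $\|(I-\theta\mathscr{A})w\|^{2}=\|w\|^{2}-2\theta\,w^{T}\mathscr{A}w+\theta^{2}\|\mathscr{A}w\|^{2}$, so $\|w\|\le\|y\|$ and $\|(I-\theta\mathscr{A})w\|\le\|y\|$ both follow from $\theta\ge 0$ and $w^{T}\mathscr{A}w\ge 0$.

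I do not anticipate a real obstacle: the content is essentially the monotonicity of $t\mapsto(1+t)^{-1}$ and the bound $|(1-t)/(1+t)|\le 1$ for $t\ge 0$. The only point that needs care is the symmetry of $\mathscr{A}$: it is what makes $(I+\theta\mathscr{A})^{-1}$ and $(I+\theta\mathscr{A})^{-1}(I-\theta\mathscr{A})$ normal and hence lets one identify the spectral norm with the largest eigenvalue in modulus. (For a non-symmetric matrix whose symmetric part is positive definite the first bound still holds, but the second can fail, so symmetry is genuinely used there.) Since the matrix $D$ to which the lemma is applied is symmetric, this hypothesis is satisfied.
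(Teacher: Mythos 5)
Your proof is correct, but note that the paper itself gives no argument for this lemma: it is quoted with the citation \cite{b17} and used as a black box in Corollary \ref{col3}, so there is no in-paper proof to match. Your spectral reduction is the standard one and is sound: writing $\mathscr{A}=Q\Lambda Q^{T}$, observing that both $(I+\theta\mathscr{A})^{-1}$ and $(I+\theta\mathscr{A})^{-1}(I-\theta\mathscr{A})$ are symmetric so that the $2$-norm is the largest eigenvalue in modulus, and checking $0<(1+t)^{-1}\le 1$ and $\left|\frac{1-t}{1+t}\right|\le 1$ for $t=\theta\lambda_j\ge 0$ is exactly what is needed; your reading of ``positive definite'' as symmetric positive definite is the right one here, since the matrix $D$ to which the lemma is applied is shown to be symmetric positive definite in Corollary \ref{col2}, and the relevant norm is the $2$-norm. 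One small correction to your closing parenthetical: for the Euclidean norm the second bound does \emph{not} genuinely require symmetry. Your own energy identity shows this: for any real $\mathscr{A}$ with positive semidefinite symmetric part, $(I-\theta\mathscr{A})$ and $(I+\theta\mathscr{A})^{-1}$ still commute, and with $w=(I+\theta\mathscr{A})^{-1}y$ one has $\|y\|^{2}-\|(I-\theta\mathscr{A})w\|^{2}=4\theta\,w^{T}\mathscr{A}w\ge 0$, which is Kellogg's lemma; what symmetry buys you is only the cleaner eigenvalue argument (and the identification of norm with spectral radius), not the validity of the estimate. This slip does not affect the proof of the lemma as stated.
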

Now, we discuss the property of matrix $D$.
\begin{theorem}\label{thm1}   
Suppose that $0<\alpha<1$ and $1<\beta\leq2$, $A$, $B$ and $D$ are defined as (\ref{eq12})-(\ref{eq14}), then the coefficients $D_{ij}$ satisfy
\begin{eqnarray*}
|D_{ii}|>\sum_{j=1,j\neq i}^{m-1}|D_{ij}|,\quad i=1,2,\cdots,m-1.
\end{eqnarray*}
i.e., $D$ is strictly diagonally dominant.
\end{theorem}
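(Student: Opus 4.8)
The plan is to read off the sign pattern of the symmetric matrix $D$ and then deduce strict diagonal dominance from the positivity of its row sums. First I would record the structure of the symmetrised blocks: for $\gamma\in\{\alpha,\beta\}$, the matrix assembled from the coefficients $w_k^{(\gamma)}$ (namely $A+A^{T}$ when $\gamma=\alpha$ and $B+B^{T}$ when $\gamma=\beta$) is symmetric, with diagonal entries $2w_1^{(\gamma)}$, with the two entries at distance $1$ from the diagonal equal to $w_0^{(\gamma)}+w_2^{(\gamma)}$, and with the entries at distance $k\ge 2$ equal to $w_{k+1}^{(\gamma)}$; in the first and last rows this pattern is merely truncated. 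Since $D=\mu_\alpha(A+A^{T})+\mu_\beta(B+B^{T})$, the argument then proceeds entrywise.

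The crucial step is the sign bookkeeping. Because $c_\alpha=\frac{1}{2\cos(\pi\alpha/2)}>0$ and $K_\alpha\ge 0$ we have $\mu_\alpha\ge 0$, whereas $c_\beta=\frac{1}{2\cos(\pi\beta/2)}<0$ for $1<\beta\le 2$ together with $K_\beta>0$ gives $\mu_\beta<0$. By Lemma~\ref{lm4} we have $w_k^{(\alpha)}<0$ for $k\ge 2$, and a direct computation gives $w_0^{(\alpha)}+w_2^{(\alpha)}=\frac{\alpha(\alpha+2)(\alpha-1)}{4}<0$; hence every off-diagonal entry of $A+A^{T}$ is negative while its diagonal $2w_1^{(\alpha)}$ is positive. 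By Lemma~\ref{lm5} we have $w_k^{(\beta)}\ge 0$ for $k\ge 3$, and $w_0^{(\beta)}+w_2^{(\beta)}=\frac{\beta(\beta+2)(\beta-1)}{4}>0$; hence every off-diagonal entry of $B+B^{T}$ is nonnegative while its diagonal $2w_1^{(\beta)}$ is negative. Multiplying by $\mu_\alpha\ge 0$ and $\mu_\beta<0$ and adding, I obtain $D_{ij}\le 0$ for every $j\neq i$, so that $\sum_{j\neq i}|D_{ij}|=-\sum_{j\neq i}D_{ij}$. It therefore suffices to prove that each row sum $R_i:=\sum_{j=1}^{m-1}D_{ij}$ is strictly positive, for this forces $D_{ii}=R_i+\sum_{j\neq i}|D_{ij}|>\sum_{j\neq i}|D_{ij}|\ge 0$, which is exactly the assertion.

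It remains to compute the row sums. The $i$-th row sum of $A+A^{T}$ equals the $i$-th row sum of $A$ plus the $i$-th column sum of $A$; from the form of $A$ in (\ref{eq12}), each of these is a partial sum $\sum_{k=0}^{p}w_k^{(\alpha)}$ with $p\ge 1$, while for the two boundary rows $i=1$ and $i=m-1$ it evaluates to $\sum_{k=0}^{m-1}w_k^{(\alpha)}+w_1^{(\alpha)}$; in all cases this is strictly positive by Lemma~\ref{lm4}. Similarly the $i$-th row sum of $B+B^{T}$ reduces to sums $\sum_{k=0}^{p}w_k^{(\beta)}$ with $p\ge 2$, and for the boundary rows to $\sum_{k=0}^{m-1}w_k^{(\beta)}+w_1^{(\beta)}$, which is strictly negative by Lemma~\ref{lm5} (the case $m\le 2$ being trivial, $D$ then being $1\times 1$ or empty). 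Thus $R_i=\mu_\alpha S_i^{(\alpha)}+\mu_\beta S_i^{(\beta)}$ with $S_i^{(\alpha)}>0$ and $S_i^{(\beta)}<0$, whence $R_i>0$ and the proof is complete.

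The one genuine difficulty is the sign accounting of the second step, above all the observation that $\mu_\beta<0$, so that the discrete second-derivative block $\mu_\beta(B+B^{T})$ carries a positive diagonal and nonpositive off-diagonals and hence has the same sign pattern as $\mu_\alpha(A+A^{T})$; this coherence of signs is precisely what rules out any cancellation and reduces the statement to positivity of the row sums. Everything else — the evaluations of $w_0^{(\gamma)}+w_2^{(\gamma)}$ and the truncated Toeplitz row-sum bookkeeping, with the first and last rows handled separately but no harder than the interior rows — is routine and uses only the monotonicity and partial-sum facts already supplied by Lemmas~\ref{lm1}, \ref{lm2}, \ref{lm4} and~\ref{lm5}.
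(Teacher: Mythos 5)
Your proof is correct, but it runs the key estimate differently from the paper. The paper performs the same preliminary sign bookkeeping ($\mu_\alpha>0$, $\mu_\beta<0$, explicit entries of $D$, $D_{i,i\pm1}<0$, $D_{ii}>0$), but then it bounds the off-diagonal sum $\sum_{j\neq i}|D_{ij}|$ directly: it completes the two truncated Toeplitz sums to the full series, uses $\sum_{k=0}^{\infty}w_k^{(\gamma)}=0$ from Lemmas \ref{lm4} and \ref{lm5}, and gets strict inequality from the discarded tail terms, ending with $\sum_{j\neq i}|D_{ij}|<2\mu_\alpha w_1^{(\alpha)}+2\mu_\beta w_1^{(\beta)}=|D_{ii}|$; it never computes row sums. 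You instead reduce strict dominance to strict positivity of the exact row sums $R_i=\sum_j D_{ij}$ and evaluate these row sums blockwise as finite partial sums, invoking $\sum_{k=0}^{m}w_k^{(\alpha)}>0$ and $\sum_{k=0}^{m}w_k^{(\beta)}<0$ (correctly noting that the latter needs the upper index to be at least $2$, which your interior/boundary row split respects). The two arguments are logically close---the finite partial-sum signs encode the same information as ``series sums to zero plus tail sign''---but your version buys a little extra robustness: it works verbatim with $\mu_\alpha\geq 0$, so the RFDE case $K_\alpha=0$ is covered (the paper's proof asserts $\mu_\alpha>0$), and you only claim $D_{ij}\leq 0$ off the diagonal, which is what actually holds at $\beta=2$ where $w_k^{(\beta)}=0$ for $k\geq 3$, whereas the paper asserts strict negativity there; in both treatments the strictness of the final inequality comes from the $\beta$-block alone, so the conclusion of Theorem \ref{thm1} is unaffected. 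The paper's completion-to-infinity display also carries a sign typo that your exact-row-sum route simply avoids.
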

\begin{proof}
It is easy to obtain
\begin{eqnarray*}
 D_{ij}=
  \begin{cases}
   \mu_\alpha w_{j-i+1}^{(\alpha)}+\mu_\beta w_{j-i+1}^{(\beta)} ,&\mbox{$j> i+1$,}\\
   \mu_\alpha( w_{0}^{(\alpha)}+ w_{2}^{(\alpha)})+\mu_\beta( w_{0}^{(\beta)}+ w_{2}^{(\beta)}) ,&\mbox{$j=i+1$,}\\
    2\mu_\alpha w_{1}^{(\alpha)}+2\mu_\beta w_{1}^{(\beta)},&\mbox{$j=i$,}\\
   \mu_\alpha( w_{0}^{(\alpha)}+ w_{2}^{(\alpha)})+\mu_\beta( w_{0}^{(\beta)}+ w_{2}^{(\beta)}),&\mbox{$j= i-1$,}\\
  \mu_\alpha w_{i-j+1}^{(\alpha)}+\mu_\beta w_{i-j+1}^{(\beta)}.&\mbox{$j<i-1$,}
\end{cases}
\end{eqnarray*}
where $\mu_\alpha=\frac{\tau K_\alpha c_\alpha}{2h^\alpha}>0$ and $\mu_\beta=\frac{\tau K_\beta c_\beta}{2h^\beta}<0$. First, we consider the signs of $D_{ij}$. According to Lemma \ref{lm4}, when $k\geq 3$, $w_k^{(\alpha)}<0$, thus $\mu_\alpha w_k^{(\alpha)}<0$. According to Lemma \ref{lm5}, when $k\geq 3$, $w_k^{(\beta)}>0$, thus $\mu_\beta w_k^{(\beta)}<0$. Therefore, $D_{ij}<0$ when $j>i+1$ or $j<i-1$. For the items $D_{i,i+1}$ and $D_{i,i-1}$, we have
\begin{align*}
w_{0}^{(\alpha)}+w_{2}^{(\alpha)}&=\frac{\alpha}{2}+\frac{\alpha(\alpha^2+\alpha-4)}{4}=\frac{\alpha(\alpha+2)(\alpha-1)}{4}<0,\\
w_{0}^{(\beta)}+w_{2}^{(\beta)}&=\frac{\beta}{2}+\frac{\beta(\beta^2+\beta-4)}{4}=\frac{\beta(\beta+2)(\beta-1)}{4}>0.
\end{align*}
Since $\mu_\alpha>0$ and $\mu_\beta<0$, then
\begin{eqnarray*}
D_{i,i+1}=D_{i,i-1}=\mu_\alpha( w_{0}^{(\alpha)}+
w_{2}^{(\alpha)})+\mu_\beta( w_{0}^{(\beta)}+ w_{2}^{(\beta)})<0.
\end{eqnarray*}
According to Lemmas \ref{lm4} and Lemma \ref{lm5}, we have $w_{1}^{(\alpha)}>0$ and $w_{1}^{(\beta)}<0$, thus
\begin{eqnarray*}
D_{i,i}=2\mu_\alpha w_{1}^{(\alpha)}+2\mu_\beta w_{1}^{(\beta)}>0,
\end{eqnarray*}
as $\mu_\alpha>0$ and $\mu_\beta<0$. Now, for a given $i$, we consider the sum
\begin{align*}
&\sum_{j=1,j\neq
i}^{m-1}|D_{ij}|=\sum_{j=1}^{i-2}|D_{ij}|+\sum_{j=i+2}^{m-1}|D_{ij}|+|D_{i,i-1}|+|D_{i,i+1}|\\
=&-\sum_{j=1}^{i-2}(\mu_\alpha w_{i-j+1}^{(\alpha)}+\mu_\beta
w_{i-j+1}^{(\beta)})-\sum_{j=i+2}^{m-1}( \mu_\alpha
w_{j-i+1}^{(\alpha)}+\mu_\beta w_{j-i+1}^{(\beta)})- 2\mu_\alpha( w_{0}^{(\alpha)}+ w_{2}^{(\alpha)})-2\mu_\beta( w_{0}^{(\beta)}+ w_{2}^{(\beta)})\\
<&\sum_{j=-\infty}^{i-2}(\mu_\alpha w_{i-j+1}^{(\alpha)}+\mu_\beta
w_{i-j+1}^{(\beta)})-\sum_{j=i+2}^{+\infty}(\mu_\alpha
w_{j-i+1}^{(\alpha)}+\mu_\beta w_{j-i+1}^{(\beta)})- 2\mu_\alpha( w_{0}^{(\alpha)}+ w_{2}^{(\alpha)})-2\mu_\beta(
w_{0}^{(\beta)}+ w_{2}^{(\beta)})\\
=&-2\mu_\alpha\sum_{k=3}^{+\infty}w_{k}^{(\alpha)}-2\mu_\beta\sum_{k=3}^{+\infty}w_{k}^{(\beta)}
- 2\mu_\alpha( w_{0}^{(\alpha)}+ w_{2}^{(\alpha)})-2\mu_\beta( w_{0}^{(\beta)}+ w_{2}^{(\beta)})\\
=&2\mu_\alpha w_{1}^{(\alpha)}+2\mu_\beta
w_{1}^{(\beta)}-2\mu_\alpha\sum_{k=0}^{+\infty}w_{k}^{(\alpha)}-2\mu_\beta\sum_{k=0}^{+\infty}w_{k}^{(\beta)}\\
=&2\mu_\alpha w_{1}^{(\alpha)}+2\mu_\beta w_{1}^{(\beta)}=|D_{i,i}|
\end{align*}
i.e., $$\sum_{j=1,j\neq i}^{m-1}|D_{ij}|<|D_{ii}|.$$
Thus, the proof is completed.
\end{proof}
According to the theorem, it is easy to conclude the following corollaries.
\begin{corollary}\label{col1}
The matrix $I+D$ is strictly diagonally dominant as well. Therefore, $I+D$ is invertible and Eq. (\ref{eq15}) is solvable.
\end{corollary}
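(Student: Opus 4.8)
The plan is to obtain the strict diagonal dominance of $I+D$ as an immediate consequence of Theorem~\ref{thm1}, and then to conclude invertibility from the classical fact that a strictly diagonally dominant square matrix is nonsingular. First I would observe that passing from $D$ to $I+D$ changes only the diagonal: $(I+D)_{ij}=D_{ij}$ for $j\neq i$, while $(I+D)_{ii}=1+D_{ii}$. In the course of proving Theorem~\ref{thm1} it was already established that $D_{ii}=2\mu_\alpha w_1^{(\alpha)}+2\mu_\beta w_1^{(\beta)}>0$, so $1+D_{ii}>D_{ii}>0$, and in particular $|(I+D)_{ii}|=1+D_{ii}$.

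Next I would chain the inequalities: for each $i=1,2,\dots,m-1$,
\[
\sum_{j=1,\,j\neq i}^{m-1}\bigl|(I+D)_{ij}\bigr|=\sum_{j=1,\,j\neq i}^{m-1}\bigl|D_{ij}\bigr|<|D_{ii}|<1+D_{ii}=\bigl|(I+D)_{ii}\bigr|,
\]
where the first strict inequality is precisely the conclusion of Theorem~\ref{thm1} and the second uses the positivity of $D_{ii}$ just recalled. Hence $I+D$ is strictly diagonally dominant.

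Finally I would invoke the Levy--Desplanques theorem (equivalently, Gershgorin's circle theorem): the eigenvalues of $I+D$ lie in discs centred at $(I+D)_{ii}$ of radius $\sum_{j\neq i}|(I+D)_{ij}|$, none of which contains the origin, so $0$ is not an eigenvalue and $I+D$ is nonsingular. Consequently $I+D$ is invertible, the system $(I+D)U^n=(I-D)U^{n-1}$ in Eq.~(\ref{eq15}) admits a unique solution $U^n$ at each time step $n$, and the Crank--Nicolson scheme is uniquely solvable.

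There is essentially no obstacle in this argument; the only point demanding a moment's attention is that adding the identity to $D$ must not trigger a cancellation that destroys the dominance. Since Theorem~\ref{thm1} already exhibits $D_{ii}>0$, we have $|1+D_{ii}|=1+|D_{ii}|$, so the diagonal entry only grows in modulus while the off-diagonal row sum is unchanged; thus the strict inequality of Theorem~\ref{thm1} is inherited by $I+D$ with room to spare.
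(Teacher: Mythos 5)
Your proposal is correct and follows the same route the paper intends: the paper leaves Corollary~\ref{col1} as an immediate consequence of Theorem~\ref{thm1}, and your argument simply makes explicit the standard steps --- the off-diagonal entries of $I+D$ coincide with those of $D$, the diagonal entry grows to $1+D_{ii}>D_{ii}>0$, so strict diagonal dominance is inherited, and nonsingularity then follows from the Levy--Desplanques/Gershgorin argument. No gaps; this matches the paper's reasoning.
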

\begin{corollary} \label{col2} 
The matrix $D$ is symmetric positive definite.
\end{corollary}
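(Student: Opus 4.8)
The plan is to read both conclusions straight off the structure of $D$ together with Theorem~\ref{thm1}. Symmetry is immediate from the definition (\ref{eq14}): since $A+A^{T}$ and $B+B^{T}$ are symmetric and $D=\mu_\alpha(A+A^{T})+\mu_\beta(B+B^{T})$ is a real linear combination of them, $D^{T}=D$. (Note that this must be argued from (\ref{eq14}) directly; diagonal dominance alone does not give symmetry.) So the only real content is positive definiteness, for which the key input is that the diagonal entries are positive — and this was already exhibited inside the proof of Theorem~\ref{thm1}, namely $D_{ii}=2\mu_\alpha w_1^{(\alpha)}+2\mu_\beta w_1^{(\beta)}>0$, using $w_1^{(\alpha)}>0$, $w_1^{(\beta)}<0$, $\mu_\alpha>0$, $\mu_\beta<0$.

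Given that, I would finish in one of two equivalent ways. Route one (Gershgorin): every eigenvalue $\lambda$ of $D$ lies in some disc $\{z:|z-D_{ii}|\le\sum_{j\ne i}|D_{ij}|\}$; by Theorem~\ref{thm1} the radius of each such disc is strictly less than $D_{ii}$, so each disc sits in the open right half-plane and $\mathrm{Re}\,\lambda>0$; since $D$ is symmetric its eigenvalues are real, hence all are strictly positive, and a symmetric matrix with positive eigenvalues is positive definite. Route two (direct quadratic form, avoiding any eigenvalue machinery): for $x\neq 0$, bounding the cross terms with $2|x_ix_j|\le x_i^{2}+x_j^{2}$ and using the symmetry of the double sum,
\[
x^{T}Dx=\sum_{i}D_{ii}x_i^{2}+\sum_{i\ne j}D_{ij}x_ix_j\ge\sum_{i}\Big(D_{ii}-\sum_{j\ne i}|D_{ij}|\Big)x_i^{2}>0
\]
by the strict diagonal dominance of Theorem~\ref{thm1}. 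I would present whichever version fits the surrounding exposition best.

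I do not anticipate a genuine obstacle: this is a routine consequence of Theorem~\ref{thm1} plus the classical fact that a real symmetric, strictly diagonally dominant matrix with positive diagonal is positive definite. The one point worth care is bookkeeping — one must invoke the strict inequality $D_{ii}>0$ and the explicit formula for $D_{ii}$ from the \emph{body} of the proof of Theorem~\ref{thm1}, not merely from its statement, so it may be cleanest to record $D_{ii}>0$ as part of the conclusion of Theorem~\ref{thm1} (or as a short remark) before deducing this corollary.
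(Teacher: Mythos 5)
Your proposal is correct and your first route (Gerschgorin's circle theorem combined with the strict diagonal dominance and positive diagonal entries from Theorem~\ref{thm1}) is exactly the argument the paper gives, including the observation that symmetry is read directly off (\ref{eq14}). Your second route via the quadratic form is a valid elementary alternative, and your remark that $D_{ii}>0$ comes from the body of Theorem~\ref{thm1}'s proof rather than its statement is a fair point that the paper glosses over.
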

\begin{proof}
In view of (\ref{eq14}), the symmetry of $D$ is evident. Let $\lambda_0$ be one eigenvalue of the matrix $D$. Then by the Gerschgorin's circle theorem \cite{b30}, we have
\begin{eqnarray*}
|\lambda_0-D_{ii}|\leq r_i=\sum_{j=1,j\neq i}^{m-1}|D_{ij}|,
\end{eqnarray*}
i.e.,
\begin{eqnarray*}
D_{ii}-\sum_{j=1,j\neq i}^{m-1}|D_{ij}|\leq\lambda_0\leq D_{ii}+\sum_{j=1,j\neq i}^{m-1}|D_{ij}|.
\end{eqnarray*}
In view of Theorem \ref{thm1}, we have $\lambda_0>0$, thus $D$ is positive definite, which completes the proof.
\end{proof}
Since $D$ is symmetric positive definite, by Lemma \ref{lm6}, the following two inequalities hold.
\begin{corollary}\label{col3}  
\begin{eqnarray*}
||(I+D)^{-1}||\leq1,\quad ||(I+D)^{-1}(I-D)||\leq1.
\end{eqnarray*}
\end{corollary}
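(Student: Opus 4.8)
The plan is to obtain this statement as an immediate consequence of Lemma~\ref{lm6} together with Corollary~\ref{col2}. First I would recall that Corollary~\ref{col2} already establishes that $D$ is a symmetric positive definite matrix of order $m-1$, so in particular it qualifies as an $(m-1)$-order positive definite matrix in the sense required by Lemma~\ref{lm6}. Then I would simply invoke Lemma~\ref{lm6} with the choice $\mathscr{A}=D$ and the parameter $\theta=1\geq 0$; the two bounds $\|(I+D)^{-1}\|\leq 1$ and $\|(I+D)^{-1}(I-D)\|\leq 1$ asserted in the corollary are precisely the two inequalities the lemma delivers in this case. This is really all that is needed.

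If instead one preferred a self-contained argument not citing Lemma~\ref{lm6}, I would use the spectral decomposition of $D$. Since $D$ is symmetric positive definite, write $D=Q\Lambda Q^{T}$ with $Q$ orthogonal and $\Lambda=\operatorname{diag}(\lambda_1,\dots,\lambda_{m-1})$, where each $\lambda_k>0$. Then $(I+D)^{-1}=Q(I+\Lambda)^{-1}Q^{T}$ has eigenvalues $1/(1+\lambda_k)\in(0,1)$, while $(I+D)^{-1}(I-D)=Q(I+\Lambda)^{-1}(I-\Lambda)Q^{T}$ has eigenvalues $(1-\lambda_k)/(1+\lambda_k)\in(-1,1)$. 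Both of these matrices are symmetric, hence normal, so their spectral ($2$-)norm equals their spectral radius, and both norms are therefore bounded by $1$.

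The substantive work has already been carried out in Theorem~\ref{thm1} and Corollary~\ref{col2}, namely the strict diagonal dominance of $D$ and the resulting positive definiteness; granted those, there is no genuine obstacle remaining and the step is routine. The only point worth a moment's attention is consistency of the matrix norm: the inequalities are being asserted for the spectral norm (equivalently, the operator norm induced by the Euclidean norm used throughout the stability analysis), and since $D$ is symmetric this norm coincides with the spectral radius, so no extra care is required when these bounds are used in the stability proof.
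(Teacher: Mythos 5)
Your proposal matches the paper's own argument exactly: the paper simply notes that $D$ is symmetric positive definite (Corollary~\ref{col2}) and invokes Lemma~\ref{lm6} (with $\mathscr{A}=D$, $\theta=1$) to obtain both inequalities. Your additional spectral-decomposition argument is a correct self-contained alternative, but the primary route you take is the same as the paper's, and it is complete.
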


\begin{theorem}\label{thm2}
The difference scheme (\ref{eq15}) is unconditionally stable.
\end{theorem}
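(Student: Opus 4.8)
The plan is to read the stability off from Corollary~\ref{col3}. The scheme (\ref{eq15}) can be written in one-step propagation form as $U^n = (I+D)^{-1}(I-D)\,U^{n-1}$, so it suffices to control the norm of the amplification matrix $(I+D)^{-1}(I-D)$ uniformly, and that bound is already available.

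First I would fix the notion of stability to be used: let $U^n$ and $\widetilde U^n$ be the solutions of (\ref{eq15}) corresponding to two sets of initial data $U^0$ and $\widetilde U^0$, and set $\rho^n = U^n - \widetilde U^n$. By linearity of the scheme, $\rho^n$ satisfies the same recursion, $(I+D)\rho^n = (I-D)\rho^{n-1}$, hence $\rho^n = (I+D)^{-1}(I-D)\,\rho^{n-1}$. (That $I+D$ is invertible, so this is well defined, is Corollary~\ref{col1}.)

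Next I would take the Euclidean vector norm together with the induced spectral matrix norm — the natural choice here since $D$ is symmetric positive definite by Corollary~\ref{col2} — and apply the second inequality of Corollary~\ref{col3}:
\[
\|\rho^n\|\;\le\;\big\|(I+D)^{-1}(I-D)\big\|\,\|\rho^{n-1}\|\;\le\;\|\rho^{n-1}\|.
\]
Iterating this from $n$ down to $0$ gives $\|\rho^n\|\le\|\rho^0\|$ for every $n\ge 0$, i.e. a perturbation in the initial data is never amplified as the scheme marches forward in time.

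Finally I would emphasise that the constant in this estimate equals $1$ and is independent of $\tau$, $h$ and $n$: Corollary~\ref{col3} rests only on Lemma~\ref{lm6} and on $D$ being symmetric positive definite (Theorem~\ref{thm1} and Corollaries~\ref{col1}--\ref{col2}), none of which requires any relation between $\tau$ and $h$; in particular $\mu_\alpha$ and $\mu_\beta$ may be arbitrary. Hence the scheme is \emph{unconditionally} stable. I do not anticipate a real obstacle here — all the substantive work (the sign/monotonicity properties of the $w_k$, the diagonal dominance and positive definiteness of $D$, and the contraction estimate of Lemma~\ref{lm6}) has already been carried out, so the only care required is to phrase stability via propagation of an initial perturbation and to record the $\tau,h$-independence of the bound.
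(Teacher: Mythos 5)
Your argument is correct and follows essentially the same route as the paper: both proofs reduce stability to the bound $\|(I+D)^{-1}(I-D)\|\le 1$ from Corollary~\ref{col3} and iterate the one-step recursion. The only cosmetic difference is that you phrase the propagated quantity as the difference of two numerical solutions with perturbed initial data, whereas the paper writes it as the difference between the numerical and exact solution vectors; the amplification estimate and its $\tau$, $h$-independence are identical.
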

\begin{proof}
Let $U^n$ and $u^n$ be the numerical and exact solution vectors respectively and
$u^n=[\,u(x_1,t_n)$, $u(x_2,t_n),\cdots$, $u(x_{m-1},t_n)\,]$. Since the
matrix $(I + D)$ is invertible, then we can obtain the following
error equation
\begin{equation}\label{eq16}
\mathscr{E}\,^n=M\mathscr{E}\,^{n-1},
\end{equation}
where $\mathscr{E}\,^n=U^n-u^n$ and $M=(I+D)^{-1}(I-D)$. By (\ref{eq16}) we have
\begin{equation*}
\mathscr{E}\,^n=M^{n-1}\mathscr{E}\,^{0},
\end{equation*}
Applying Corollary \ref{col3}, we obtain
\begin{equation*}
||\mathscr{E}\,^n||\leq
||M^{n-1}|| ||\mathscr{E}\,^{0}||\leq
||M||^{n-1}||\mathscr{E}\,^{0}||\leq||\mathscr{E}\,^{0}||,
\end{equation*}
which means difference scheme (\ref{eq15}) is unconditionally stable.
\end{proof}
\subsection{Convergence} \label{sec4.2}
In the following, we suppose the symbol $C$ is a generic positive constant, which may take different values at different places. According to (\ref{eqN1}), the local truncation error of the Crank-Nicolson scheme (\ref{eq11}) is $\mathcal {R}_i^n=\mathcal {O}(\tau^3+\tau h^2)$.


\begin{theorem}\label{thm3}
The numerical solution $U^n$ unconditionally converges to the exact solution $u^n$ as $h$ and $\tau $ tend to zero, and
\begin{eqnarray*}
||U^n-u^n||\leq C(\tau^2 +h^2).
\end{eqnarray*}
\end{theorem}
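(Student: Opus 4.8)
The plan is to recast convergence as a stability estimate for the error vector, exactly as in the proof of Theorem \ref{thm2}. Write $u^n=[\,u(x_1,t_n),\dots,u(x_{m-1},t_n)\,]^T$ for the exact solution sampled on the grid and set $\mathscr{E}^n=U^n-u^n$. The first step is to record that, by the semi-discrete identity (\ref{eqN1}) combined with the WSGD approximations (\ref{eq4})--(\ref{eq7}) (justified by Remark \ref{rem1} and Lemma \ref{lm3} after zero extension of $u$ past the boundary), the exact solution satisfies the scheme (\ref{eq11}) with an extra term, which in matrix form reads
\begin{equation*}
(I+D)u^n=(I-D)u^{n-1}+R^n,\qquad R^n=[\,\mathcal{R}_1^n,\dots,\mathcal{R}_{m-1}^n\,]^T,
\end{equation*}
where $\mathcal{R}_i^n=\mathcal{O}(\tau^3+\tau h^2)$ is the local truncation error already identified after (\ref{eqN1}). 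Subtracting this from the numerical scheme $(I+D)U^n=(I-D)U^{n-1}$ gives the error equation
\begin{equation*}
(I+D)\mathscr{E}^n=(I-D)\mathscr{E}^{n-1}-R^n,\qquad\text{equivalently}\qquad \mathscr{E}^n=M\mathscr{E}^{n-1}-(I+D)^{-1}R^n,
\end{equation*}
with $M=(I+D)^{-1}(I-D)$ the same amplification matrix as in Theorem \ref{thm2}.

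Next I would take norms and invoke the contraction bounds already established. Since $D$ is symmetric positive definite (Corollary \ref{col2}), Corollary \ref{col3} yields $\|M\|\le 1$ and $\|(I+D)^{-1}\|\le 1$, whence
\begin{equation*}
\|\mathscr{E}^n\|\le\|M\|\,\|\mathscr{E}^{n-1}\|+\|(I+D)^{-1}\|\,\|R^n\|\le\|\mathscr{E}^{n-1}\|+\|R^n\|.
\end{equation*}
Because the initial data is imposed exactly ($u_i^0=\psi(ih)=u(x_i,0)$), we have $\mathscr{E}^0=0$, and iterating the inequality gives $\|\mathscr{E}^n\|\le\sum_{k=1}^{n}\|R^k\|$.

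It then remains to bound $\|R^k\|$. Working in the discrete $L^2$ norm $\|v\|=\big(h\sum_{i=1}^{m-1}|v_i|^2\big)^{1/2}$ — for which the induced operator norm still coincides with the spectral norm, so Corollary \ref{col3} remains applicable — the componentwise estimate $\mathcal{R}_i^k=\mathcal{O}(\tau^3+\tau h^2)$ and $h(m-1)\le L$ give $\|R^k\|\le C(\tau^3+\tau h^2)$. Therefore
\begin{equation*}
\|\mathscr{E}^n\|\le n\,C(\tau^3+\tau h^2)\le\frac{T}{\tau}\,C(\tau^3+\tau h^2)=CT(\tau^2+h^2),
\end{equation*}
using $n\tau\le T$. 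Since $C$ does not depend on $\tau$ or $h$, this is precisely the asserted unconditional $\mathcal{O}(\tau^2+h^2)$ convergence.

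The routine part here is the iteration, which simply reuses the Theorem \ref{thm2} machinery; the point that needs care is the truncation-error bookkeeping. One must check that $u$ is smooth enough in $t$ for the Crank--Nicolson averaging at $t_{n-1/2}$ to be genuinely second order and regular enough in $x$ for the WSGD operators to deliver the $\mathcal{O}(h^2)$ stated in (\ref{eq4})--(\ref{eq7}), and then choose the vector norm so that $\|R^k\|=\mathcal{O}(\tau^3+\tau h^2)$ while the Corollary \ref{col3} bounds $\|M\|\le1$, $\|(I+D)^{-1}\|\le1$ still hold — the discrete $L^2$ norm does both, whereas the plain Euclidean norm would inject an extra $\sqrt{m-1}\sim h^{-1/2}$ factor and spoil the clean $\mathcal{O}(\tau^2+h^2)$ rate.
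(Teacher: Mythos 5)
Your proposal is correct and follows essentially the same route as the paper: subtract the scheme from the exact-solution relation (\ref{eqN1}) to get the error equation $E^n=ME^{n-1}+\bm{b}$, invoke Corollary \ref{col3} for $\|M\|\le1$ and $\|(I+D)^{-1}\|\le1$, and accumulate $n$ local truncation errors of size $\mathcal{O}(\tau^3+\tau h^2)$ to conclude $\|E^n\|\le CT(\tau^2+h^2)$. Your only addition is the explicit choice of the $h$-weighted discrete $L^2$ norm to keep $\|R^k\|=\mathcal{O}(\tau^3+\tau h^2)$ without the $\sqrt{m-1}$ inflation, a bookkeeping point the paper's proof passes over silently but which does not change the argument.
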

\begin{proof}
Let $e^n_i$ denote the error at grid points $(x_i,t_n)$ and $e^n_i=u(x_i,t_n)-u^n_i$. Combining Eqs.(\ref{eqN1})-(\ref{eq11}), yields
\begin{align*}
&e_{i}^{n}+\mu_\alpha\bigg[\sum_{k=0}^{i+1}w_k^{(\alpha)}e^n_{i-k+1}+\sum_{k=0}^{m-i+1}w_k^{(\alpha)}e^n_{i+k-1}\bigg]
+\mu_\beta\bigg[\sum_{k=0}^{i+1}w_k^{(\beta)}e^n_{i-k+1}+\sum_{k=0}^{m-i+1}w_k^{(\beta)}e^n_{i+k-1}\bigg]\nonumber\\
=&e_i^{n-1}-\mu_\alpha\bigg[\sum_{k=0}^{i+1}w_k^{(\alpha)}e^{n-1}_{i-k+1}+\sum_{k=0}^{m-i+1}w_k^{(\alpha)}e^{n-1}_{i+k-1}\bigg]
-\mu_\beta\bigg[\sum_{k=0}^{i+1}w_k^{(\beta)}e^{n-1}_{i-k+1}+\sum_{k=0}^{m-i+1}w_k^{(\beta)}e^{n-1}_{i+k-1}\bigg]+\mathcal
{O}(\tau^3+\tau h^2).
\end{align*}
Using the conditions (\ref{eq2}) and (\ref{eq3}), we obtain the errors $e^0_i=0$ and $e^n_0=e^n_m=0$ for
$i=1,2,\cdots,m-1$ and $j=0,1,\cdots,N$. We can write the system in matrix-vector form
\begin{eqnarray*}
(I+D) E^n=(I-D)E^{n-1}+\mathcal {O}(\tau^3+\tau h^2)\chi
\end{eqnarray*}
or
\begin{eqnarray*}
E^n=ME^{n-1}+\bm{b},
\end{eqnarray*}
where $\chi=[1,1,\cdots,1]^T$, $E^n=[e^n_1,e^n_2,\cdots,e^n_{m-1}]^T$, $D=\mu_\alpha(A+A^T)+\mu_\beta(B+B^T)$, $M=(I+D)^{-1}(I-D)$ and $\bm{b}=\mathcal {O}(\tau^3+\tau h^2)(I+D)^{-1}$. By iterating and noting that $E^0=\bm{0}$, we obtain
\begin{eqnarray*}
E^n=(M^{n-1}+M^{n-2}+\cdots+I)\bm{b}.
\end{eqnarray*}
Now, from Corollary \ref{col3}, we have $||M||<1$ and $||(I+D)^{-1}||<1$. Then upon taking norms,
\begin{align*}
||E^n||&\leq(||M^{n-1}||+||M^{n-2}||+\cdots+1)||\bm{b}||\\
&\leq(1+1+\cdots+1)||\bm{b}||\\
&\leq n \mathcal {O}(\tau^3+\tau h^2)=T\mathcal {O}(\tau^2 +h^{2}).
\end{align*}
Thus,
\begin{align*}
||E^n||_{\infty}\leq C(\tau^2+h^{2}),
\end{align*}
which completes the proof.
\end{proof}
\subsection{Improving the convergence order}\label{sec4.3}
Here we use the Richardson extrapolation method (REM) to improve the convergence order. Suppose that $\mathcal {I}_h f$ is the approximation solution of the function of $f(x)$ with an asymptotic expansion
\begin{eqnarray*}
f=\mathcal {I}_h f+C_1h^2+\mathcal {O}(h^3),\quad h\to 0,~C_1\neq0.
\end{eqnarray*}
Then we have
\begin{eqnarray*}
f=\mathcal {I}_{h/2} f+C_1(h/2)^2+\mathcal {O}(h^3).
\end{eqnarray*}
Eliminating the middle terms $C_1h^2$ on the right, we find
\begin{eqnarray}\label{eq17}
f=\frac{4\mathcal {I}_{h/2} f-\mathcal {I}_h f}{3}+\mathcal
{O}(h^3),
\end{eqnarray}
which means the approximation order of $f(x)$ has been improved from $\mathcal {O}(h^2)$ to $\mathcal {O}(h^3)$. Repeatedly, we can improve the approximation order of $f(x)$ from $\mathcal {O}(h^3)$ to $\mathcal {O}(h^4)$. Suppose that $\mathcal {G}_h f$ is the approximation solution of the function of $f(x)$ with an asymptotic expansion
\begin{eqnarray*}
f=\mathcal {G}_h f+C_2h^3+\mathcal {O}(h^4),\quad h\to 0,~C_2\neq0.
\end{eqnarray*}
Then we have
\begin{eqnarray*}
f=\mathcal {G}_{h/2} f+C_2(h/2)^3+\mathcal {O}(h^4).
\end{eqnarray*}
Eliminating the middle terms $C_2h^3$ on the right, we find
\begin{eqnarray}\label{eq18}
f=\frac{8\mathcal {G}_{h/2} f-\mathcal {G}_h f}{7}+\mathcal
{O}(h^4).
\end{eqnarray}
According to Theorem \ref{thm3}, we know that the numerical method converges at the rate of $\mathcal{O}(\tau^2+h^2)$. In order to improve the convergence order, we apply the REM on a coarse grid $\tau=h$ and then on a finer grid of size $\tau/2=h/2$ and $\tau/4=h/4$. By applying the Richardson extrapolation formulae (\ref{eq17}) and (\ref{eq18}) consecutively, the convergence order can be improved from $\mathcal {O}(\tau^2+h^2)$ to $\mathcal {O}(\tau^4+h^4)$.

\section{Numerical examples}\label{sec5}
In order to demonstrate the effectiveness of numerical methods, some examples are presented.
\begin{example} First, we consider the following RFDE ($K_\alpha=0$) with a source term
\begin{eqnarray*}
   \begin{cases}
   ~\frac{\partial u(x,t)}{\partial {t}}=\frac{\partial^\beta u(x,t)}{\partial\left|x\right|^\beta}+f(x,t),\quad 0<x<1,\quad 0<t\leq T, \\
   ~u(x,0)=x^2(1-x)^2,\quad\quad 0\le x\le 1,\\
   ~ u(0,t)=u(1,t)=0,\quad\quad 0\le t\le T,
\end{cases}
\end{eqnarray*}
where $1<\beta\leq 2$,
\begin{align*}
f(x,t)&=-x^2(1-x)^2e^{-t}+\frac{e^{-t}}{2\cos\frac{\beta\pi}{2}}
\Big\{\frac{24}{\Gamma(5-\beta)}[x^{4-\beta}+(1-x)^{4-\beta}]\\
&-\frac{12}{\Gamma(4-\beta)}[x^{3-\beta}+(1-x)^{3-\beta}]
+\frac{2}{\Gamma(3-\beta)}[x^{2-\beta}+(1-x)^{2-\beta}]\Big\},
\end{align*}
and the exact solution is $u(x,t)=x^2(1-x)^2e^{-t}$.
\end{example}
The related numerical results are given in Table \ref{tab1}. It shows the error and convergence order of the Crank-Nicolson scheme at $t=1$ with $\tau=h$, where $\beta$ is corresponding to three distinct values: $\beta=1.2$, $\beta=1.5$, $\beta=1.8$. It can be seen that the numerical results are in excellent agreement with the exact solution.
\begin{table}[H]
\caption{The error and convergence order of the CN scheme of the RFDE.}
\label{tab1}
\centering
\begin{tabular}{c  c c c ccc}
 \toprule
 \multirow{2}{*}{\centering \quad$\tau=h$\quad}
 &\multicolumn{2}{c}{$ \beta=1.2$} &\multicolumn{2}{c}{$ \beta=1.5$} &\multicolumn{2}{c}{$ \beta=1.8$}\\
\cmidrule{2-7}
        & $ ||E(h,\tau)||$ &  Order  &  $ ||E(h,\tau)||$  &  Order  &  $ ||E(h,\tau)||$  &  Order  \\
\midrule
$ 1/8$  & 1.6781E-03     &        &  1.8350E-03      &        &  1.7827E-03      &       \\
$1/16$  & 3.9728E-04     &  2.08  &  4.3608E-04      &  2.07  &  4.3250E-04      & 2.04  \\
$1/32$  & 9.4291E-05     &  2.07  &  1.0349E-04      &  2.08  &  1.0482E-04      & 2.04  \\
$1/64$  & 2.2442E-05     &  2.07  &  2.4538E-05      &  2.08  &  2.5351E-05      & 2.05  \\
$1/128$ & 5.3679E-06     &  2.06  &  5.8261E-06      &  2.07  &  6.1256E-06      & 2.05  \\
\bottomrule
\end{tabular}
\end{table}
\begin{table}[H] 
\caption{The error and convergence order of the CN scheme of the RFADE.}
\label{tab2}
\centering
\begin{tabular}{c  c c c ccc}
\toprule
\multirow{2}{*}{\centering $\alpha=0.1,\tau=h$}
 &\multicolumn{2}{c}{$\beta=1.2$} &\multicolumn{2}{c}{$ \beta=1.5$} &\multicolumn{2}{c}{$ \beta=1.8$}\\
\cmidrule{2-7}
&$ ||E(h,\tau)||$&Order&  $ ||E(h,\tau)||$  &Order&  $ ||E(h,\tau)||$  &Order  \\
\midrule
$ 1/8$ & 2.8322E-05      &        &  2.8689E-05      &        & 2.4212E-05      &       \\
$1/16$  & 7.1828E-06     &  1.98  &  7.0817E-06      &  2.02  & 5.7905E-06      & 2.06  \\
$1/32$  & 1.8334E-06     &  1.97  &  1.7860E-06      &  1.99  & 1.4386E-06      & 2.01  \\
$1/64$ & 4.6575E-07      &  1.98  &  4.5086E-07      &  1.99  & 3.5998E-07      & 2.00  \\
$1/128$ & 1.1756E-07     &  1.99  &  1.1343E-07      &  1.99  & 9.0132E-08      & 2.00  \\
\midrule
 \multirow{2}{*}{\centering $\alpha=0.5,\tau=h$}
 &\multicolumn{2}{c}{$ \beta=1.2$} &\multicolumn{2}{c}{$ \beta=1.5$} &\multicolumn{2}{c}{$ \beta=1.8$}\\
\cmidrule{2-7}
& $ ||E(h,\tau)||$ &  Order  &  $ ||E(h,\tau)||$  &  Order  &  $ ||E(h,\tau)||$  &  Order  \\
\midrule
$ 1/8$  & 4.2703E-05     &        &  4.3144E-05      &        & 3.6635E-05      &       \\
$1/16$  & 1.0824E-05     &  1.98  &  1.0665E-05      &  2.02  & 8.7850E-06      & 2.06  \\
$1/32$  & 2.7622E-06     &  1.97  &  2.6913E-06      &  1.99  & 2.1851E-06      & 2.01  \\
$1/64$  & 7.0166E-07     &  1.98  &  6.7960E-07      &  1.99  & 5.4714E-07      & 2.00  \\
$1/128$ & 1.7712E-07     &  1.99  &  1.7101E-07      &  1.99  & 1.3704E-07      & 2.00  \\
\midrule
\multirow{2}{*}{\centering $\alpha=0.9,\tau=h$}
 &\multicolumn{2}{c}{$ \beta=1.2$} &\multicolumn{2}{c}{$ \beta=1.5$} &\multicolumn{2}{c}{$ \beta=1.8$}\\
\cmidrule{2-7}
& $ ||E(h,\tau)||$ &  Order  &  $ ||E(h,\tau)||$  &  Order  &  $ ||E(h,\tau)||$  &  Order  \\
\midrule
$ 1/8$  & 6.6633E-05     &        &  6.6253E-05      &        & 5.6545E-05      &       \\
$1/16$  & 1.6852E-05     &  1.98  &  1.6414E-05      &  2.01  & 1.3633E-05      & 2.05  \\
$1/32$  & 4.3006E-06     &  1.97  &  4.1491E-06      &  1.98  & 3.4009E-06      & 2.00  \\
$1/64$  & 1.0929E-06     &  1.98  &  1.0488E-06      &  1.98  & 8.5302E-07      & 2.00 \\
$1/128$ & 2.7596E-07     &  1.99  &  2.6409E-07      &  1.99  & 2.1385E-07      & 2.00  \\
\bottomrule
\end{tabular}
\end{table}

\begin{example} Then, we consider the following RFADE with a source term:
\begin{eqnarray*}
   \begin{cases}
   ~\frac{\partial u(x,t)}{\partial {t}}=K_{\alpha}\frac{\partial^\alpha}{\partial \left|x\right|^\alpha}u(x,t)
   +K_{\beta}\frac{\partial^\beta}{\partial \left|x\right|^\beta}u(x,t)+f(x,t),\quad 0<x<1,\quad 0<t\leq T, \\
   ~u(x,0)=0,\quad\quad 0\le x\le 1,\\
   ~ u(0,t)=u(1,t)=0,\quad 0\le t\le T,
\end{cases}
\end{eqnarray*}
where $0<\alpha<1$, $1<\beta\leq 2$, $l(x,p)=x^p+(1-x)^p$ and
\begin{align*}
f(x,t)&=\frac{K_\alpha t^\beta e^{\alpha t}}
{2\cos(\alpha\pi/2)}\Big\{\frac{\Gamma(7)l(x,6-\alpha)}{\Gamma(7-\alpha)}
-\frac{6\Gamma(8)l(x,7-\alpha)}{\Gamma(8-\alpha)}
+\frac{15\Gamma(9)l(x,8-\alpha)}{\Gamma(9-\alpha)}
\\&-\frac{20\Gamma(10)l(x,9-\alpha)}{\Gamma(10-\alpha)}
+\frac{15\Gamma(11)l(x,10-\alpha)}{\Gamma(11-\alpha)}
-\frac{6\Gamma(12)l(x,11-\alpha)}{\Gamma(12-\alpha)}\\
&+\frac{\Gamma(13)l(x,12-\alpha)}{\Gamma(13-\alpha)}\Big\}
+\frac{K_\beta t^\beta e^{\alpha t}}
{2\cos(\beta\pi/2)}\Big\{\frac{\Gamma(7)l(x,6-\beta)}{\Gamma(7-\beta)}
-\frac{6\Gamma(8)l(x,7-\beta)}{\Gamma(8-\beta)}
\\&+\frac{15\Gamma(9)l(x,8-\beta)}{\Gamma(9-\beta)}
-\frac{20\Gamma(10)l(x,9-\beta)}{\Gamma(10-\beta)}
+\frac{15\Gamma(11)l(x,10-\beta)}{\Gamma(11-\beta)}
\\&-\frac{6\Gamma(12)l(x,11-\beta)}{\Gamma(12-\beta)}
+\frac{\Gamma(13)l(x,12-\beta)}{\Gamma(13-\beta)}\Big\}
+t^{\beta-1}e^{\alpha t}(\beta+\alpha t)x^6(1-x)^6
\end{align*}
and the exact solution is $u(x,t)=t^{\beta}e^{\alpha t}x^6(1-x)^6$.
\end{example}
Here, we take $K_\alpha=K_\beta=2$, $\alpha=0.1, ~0.5,~0.9$ and $\beta=1.2,~1.5,~1.8$. The error and convergence order of the Crank-Nicolson scheme of the RFADE at $t=1$ with $\tau=h$ is shown in Table \ref{tab2}. As expected, the convergence order can be reached second order in both time and space direction. By applying the REM (\ref{eq17}) and (\ref{eq18}) consecutively, we get the improved error and convergence order which is shown in Table \ref{tab3}. It can be seen that the convergence order has been improved from second order to fourth order.

\begin{table}[H] 
\caption{The error and convergence order of REM.}
\label{tab3}
\centering
\begin{tabular}{c  c c c ccc}
 \toprule
 \multirow{2}{*}{\centering $\beta=1.8,\tau=h$}
 &\multicolumn{2}{c}{$ \alpha=0.1$} &\multicolumn{2}{c}{$ \alpha=0.5$} &\multicolumn{2}{c}{$ \alpha=0.9$}\\
\cmidrule{2-7}
& $ ||E(h,\tau)||$ &  Order  &  $ ||E(h,\tau)||$  &  Order  &  $ ||E(h,\tau)||$  &  Order  \\
\midrule
$ 1/8$  & 2.5866E-08     &        &  3.9130E-08      &        & 6.2601E-08      &       \\
$1/16$  & 1.7240E-09     &  3.91  &  2.6351E-09      &  3.89  & 4.3563E-09      & 3.85  \\
$1/32$  & 1.1610E-10     &  3.89  &  1.7772E-10      &  3.89  & 2.9783E-10      & 3.87  \\
$1/64$  & 7.6467E-12     &  3.92  &  1.1768E-11      &  3.92  & 1.9906E-11      & 3.90  \\
\bottomrule
\end{tabular}
\end{table}

\begin{example} Finally, we consider the following RFADE:
\begin{eqnarray*}
\begin{cases}
~\frac{\partial u(x,t)}{\partial {t}}=K_{\alpha}\frac{\partial^\alpha u(x,t)}{\partial\left|x\right|^\alpha}+K_{\beta}\frac{\partial^\beta u(x,t)}{\partial\left|x\right|^\beta},\quad 0<x<\pi,\quad 0<t\leq T, \\
~u(x,0)=x^2(\pi-x),\quad\quad 0\le x\le \pi,\\
~ u(0,t)=u(\pi,t)=0,\quad\quad0\le t\le T,
\end{cases}
\end{eqnarray*}
where $0<\alpha<1$, $1<\beta\leq 2$. According to \cite{b28}, the analytical solution is given by
\begin{equation*}
u(x,t)=\sum_{n=1}^{\infty}\Big[\frac{8}{n^3}(-1)^{n+1}-\frac{4}{n^3}\Big]\sin(nx)
\exp\Big(-[K_{\alpha}(n^2)^{\alpha/2}+K_\beta(n^2)^{\beta/2}]t\Big).
\end{equation*}
\end{example}
Here, we take $K_\alpha=K_\beta=0.15$. In Figure \ref{fig1}, we present the comparison of the numerical solution with the analytical solution  at $T=0.4$ with fixed $\alpha=0.4,~\beta=1.8, ~h=\tau=1/100$. In Figure \ref{fig2}, we present the behavior of the RFADE for different $\alpha$ at  $T=10.0$ with fixed $\beta=1.7, ~h=\tau=1/100$. In Figure \ref{fig3}, we present the behavior of the RFADE for different $\beta$ at  $T=10.0$ with fixed $\alpha=0.3, ~h=\tau=1/100$. In Figure \ref{fig4}, we present the behavior of the RFADE at different time $T$ with fixed $\alpha=0.4,~\beta=1.6,~h=\tau=1/100$.
\begin{figure}[H]
\centering \scalebox{0.45}[0.45]{\includegraphics{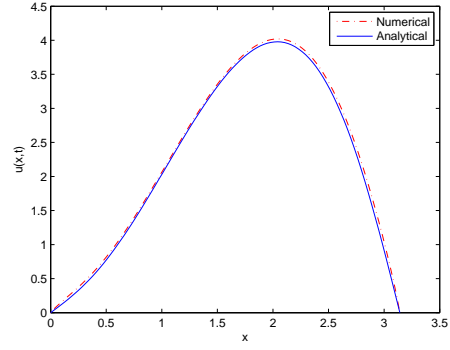}}
\caption{The comparison of the numerical solution and analytical solution  at
$T=0.4$ with fixed $\alpha=0.4,~\beta=1.8$.}
\label{fig1}
\end{figure}

\begin{figure}[H]
\centering \scalebox{0.45}[0.45]{\includegraphics{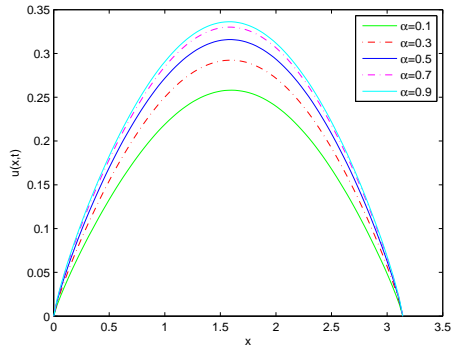}}
\caption{The numerical approximation of $u(x,t)$ for the RFADE with $\alpha=0.1,~ 0.3,~0.5,~0.7,~0.9$ at $T=10.0$ with fixed $\beta=1.7$.}
\label{fig2}
\end{figure}

\begin{figure}[H]
\centering \scalebox{0.45}[0.45]{\includegraphics{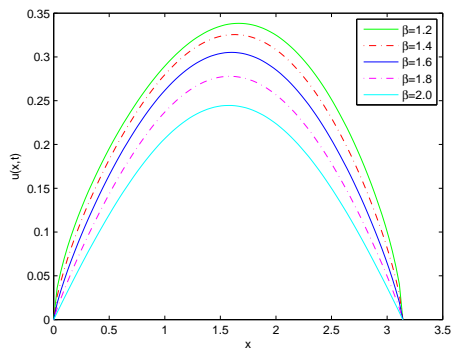}}
\caption{The numerical approximation of $u(x,t)$ for the RFADE with  $\beta=1.2,~1.4,~1.6,~1.8,~2.0$ at  $T=10.0$ with fixed $\alpha=0.3$.}
\label{fig3}
\end{figure}

\begin{figure}[H]
\centering \scalebox{0.45}[0.45]{\includegraphics{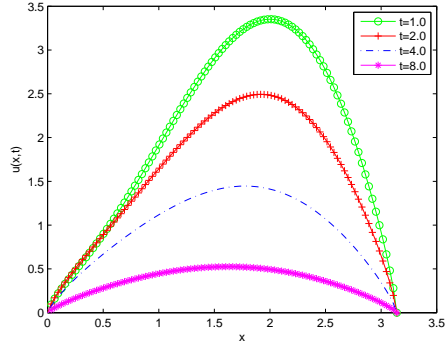}}
\caption{The numerical approximation of $u(x,t)$ for the RFADE with  $\alpha=0.4,~\beta=1.6$ at $T=1.0,~2.0,~4.0,~8.0$.}
\label{fig4}
\end{figure}

\section{Conclusions}\label{sec6}
In this paper, we have developed and demonstrated a second order {f}inite difference method for solving a class of  Riesz fractional advection-dispersion equation. Firstly, based on the WSGD operators, applying the {f}inite difference method, we derived the Crank-Nicolson scheme of the problem and rewrote the scheme as a matrix form. Subsequently, we proved that the Crank-Nicolson scheme is unconditionally stable and convergent with the accuracy of $\mathcal{O}(\tau^2+h^{2})$. Finally, some numerical results for the fractional {f}inite difference method are given to show the stability, consistency, and convergence of our computational approach. This technique can be extended to two-dimensional or three-dimensional problems with complex regions. In the future, we would like to investigate {f}inite difference method for the fractional problem in high dimensions.

\section{Acknowledgments}
This research is partially supported by the NSF of China under grant 11471274 and the Natural Science Foundation of Fujian (Grant No. 2013J01021).

\end{document}